\numberwithin{equation}{section}
\theoremstyle{plain}
\newtheorem{thm}{Theorem}[section]
\newtheorem{prop}[thm]{Proposition}
\newtheorem{cor}[thm]{Corollary}
\theoremstyle{definition}
\newtheorem{exa}[thm]{Example}
\newtheorem{rem}[thm]{Remark}
\newtheorem{defi}[thm]{Definition}
\newcommand{\real}{\mathbb{R}}
\newcommand{\E}{\mathbb{E}}
\newcommand{\R}{\mathbb{R}}
\newcommand{\Pb}{\mathbb{P}}
\begin{document}
\title[Distance in 4th moment theorem]{Convergence of the Fourth Moment and Infinite Divisibility: Quantitative estimates. }

\author{Octavio Arizmendi}
\author{Arturo Jaramillo}
\address{Research Center for Mathematics, CIMAT, Aparatado Postal 402, Guanajuato, GTO, 36240, Mexico}

\date{\today}
\maketitle

\begin{abstract}
We give an estimate for the Kolmogorov distance between an  infinitely divisible distribution (with mean zero and variance one) and the standard Gaussian distribution in terms of the difference between the fourth moment and 3. In a similar fashion we give an estimate for the Kolmogorov distance between a freely infinitely divisible distribution and the Semicircle distribution in terms of the difference between the fourth moment and 2.  
\end{abstract}

\section{Introduction and Statement of Results}

In recent years, many interest has been put in the characterization
of those chaotic sequences $\{X_n;n\geq1\}$ verifying a Central Limit Theorem (CLT), that is,
such that $X_n$ converges in distribution to $\mathcal{N}(0,1)$ (as $n\rightarrow\infty$), where $\mathcal{N}(0,1)$
denotes a centered Gaussian law with unit variance. A solution to this problem was first given by Nualart and Peccati in
the form of the following ``fourth moment theorem".

\begin{thm}[\cite{NuPe}] \label{T1} 
Let $\{X_{n};n\geq1\}$ be a sequence of multiple Wiener-It\^o integrals of the form $X_n=I^W_m(f_n)$, for kernels $f_{n}\in L^{2}(\R_{+}^{m},\text{d}t)$ ($\text{d}t$ denotes the Lebesgue measure), such that $E[X_n^2] \rightarrow 1$.  Then the following are equivalent
\begin {enumerate}
\item $ E[X_n^4] \rightarrow 3$
\item $\mu_{X_n}\rightarrow \mathcal{N}(0,1)$
\end{enumerate}
\end{thm}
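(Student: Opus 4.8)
The plan is to prove the equivalence through the combination of Malliavin calculus and Stein's method, following the now-standard route that refines the original argument; I write $N$ for a standard Gaussian random variable throughout. The implication (2) $\Rightarrow$ (1) is the softer of the two: because all the $X_n$ live in the fixed $m$-th Wiener chaos, the hypercontractivity of the Ornstein--Uhlenbeck semigroup gives equivalence of all $L^p$-norms on that chaos, so that $\sup_n E[X_n^p] < \infty$ for every $p$. In particular $\{X_n^4\}$ is uniformly integrable, and convergence in distribution to $\mathcal{N}(0,1)$ then upgrades to convergence of the fourth moments, yielding $E[X_n^4] \to E[N^4] = 3$. So the substance of the theorem is the implication (1) $\Rightarrow$ (2).

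For (1) $\Rightarrow$ (2) I would argue quantitatively. Write $D$ for the Malliavin derivative and $L$ for the generator of the Ornstein--Uhlenbeck semigroup, so that $LX_n = -m X_n$ and hence $-DL^{-1}X_n = \frac{1}{m} D X_n$. Stein's method provides, for a smooth test function $h$ with Stein solution $f$, the identity
$$E[h(X_n)] - E[h(N)] = E[f'(X_n) - X_n f(X_n)],$$
while Malliavin integration by parts gives $E[X_n f(X_n)] = E[f'(X_n)\langle DX_n, -DL^{-1}X_n\rangle] = \frac{1}{m} E[f'(X_n)\|DX_n\|^2]$. Combining the two yields, for a suitable probability distance $d$, the bound
$$d(X_n, N) \le C\, E\!\left[\Big|1 - \tfrac{1}{m}\|DX_n\|^2\Big|\right] \le C\sqrt{\operatorname{Var}\!\Big(\tfrac{1}{m}\|DX_n\|^2\Big)} + C\big|E[X_n^2]-1\big|,$$
where I used the identity $E[\tfrac{1}{m}\|DX_n\|^2] = E[X_n^2]$ (itself a consequence of the same integration by parts applied to $f(x)=x$). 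Since $E[X_n^2]\to 1$ by hypothesis, everything reduces to showing $\operatorname{Var}(\tfrac{1}{m}\|DX_n\|^2)\to 0$.

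The final and decisive step is a chaos computation via the multiplication formula for multiple integrals. Expanding $X_n^2$, $\|DX_n\|^2$ and $X_n^4$ in terms of the symmetrized contractions $f_n\, \widetilde{\otimes}_r\, f_n$, one finds that both
$$\operatorname{Var}\!\Big(\tfrac{1}{m}\|DX_n\|^2\Big) \quad\text{and}\quad E[X_n^4] - 3\,(E[X_n^2])^2$$
are nonnegative linear combinations, with strictly positive and mutually comparable coefficients, of the same quantities $\|f_n\, \widetilde{\otimes}_r\, f_n\|^2$, $r = 1,\dots,m-1$. Consequently each of these objects tends to zero if and only if all the contraction norms do, which shows that hypothesis (1) (equivalently $E[X_n^4]-3(E[X_n^2])^2\to 0$, using $E[X_n^2]\to1$) forces $\operatorname{Var}(\tfrac{1}{m}\|DX_n\|^2)\to 0$ and hence $d(X_n,N)\to 0$.

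I expect the main obstacle to be the bookkeeping in this last step: carefully applying the product formula to identify the two expansions and verifying that the coefficients are comparable (so that one sum controls the other) requires tracking the combinatorial factors $r!\binom{m}{r}^2(2m-2r)!$ and the passage between $\|f_n\otimes_r f_n\|$ and the symmetrized $\|f_n\, \widetilde{\otimes}_r\, f_n\|$. Everything else --- the Stein bound and the hypercontractivity argument --- is comparatively routine once the chaos identities are in place.
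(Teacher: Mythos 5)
Your proposal is correct, but it cannot be compared with ``the paper's own proof'' for a simple reason: the paper never proves Theorem \ref{T1}; it is quoted from \cite{NuPe} as background, and the paper's own machinery (the Berry--Esseen theorem applied to $N$-divisible laws) is never brought to bear on Wiener chaos. The meaningful comparison is therefore with the original argument of Nualart and Peccati, and your route genuinely differs from it. Both arguments share the same combinatorial core: the multiplication formula shows that $E[X_n^4]-3(E[X_n^2])^2$ is a positive linear combination of the contraction norms $\|f_n\,\widetilde{\otimes}_r\,f_n\|^2$, $r=1,\dots,m-1$, so condition (1) is equivalent to the vanishing of all contractions. From there, however, \cite{NuPe} proceeds qualitatively by stochastic calculus: the multiple integral is represented as a Brownian martingale and time-changed via the Dambis--Dubins--Schwarz theorem, and vanishing contractions force the quadratic variation to stabilize, giving convergence in law with no rate. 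Your argument instead runs through the Malliavin--Stein bound $d(X_n,N)\le C\,\E\bigl|1-\tfrac{1}{m}\|DX_n\|^2\bigr|$ together with the identity expressing $\mathrm{Var}\bigl(\tfrac{1}{m}\|DX_n\|^2\bigr)$ as a positive combination of the same contraction norms with comparable coefficients; this is the later approach of Nourdin and Peccati \cite{NoPe1,NoPe4}, not that of \cite{NuPe}. What your route buys is precisely a quantitative conclusion, $d(X_n,N)\le C\sqrt{E[X_n^4]-3(E[X_n^2])^2}+C\bigl|E[X_n^2]-1\bigr|$, i.e.\ the Berry--Esseen-type rate that this paper itself quotes (Theorem 5.2.6 of \cite{NoPe4}) in its Double Integrals example --- very much in the spirit of Theorems \ref{T5} and \ref{T6}. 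Your handling of the two side issues is also sound: hypercontractivity on a fixed chaos legitimately upgrades convergence in law to convergence of fourth moments for the implication (2) $\Rightarrow$ (1), and splitting off $|E[X_n^2]-1|$ correctly accounts for the hypothesis $E[X_n^2]\to 1$ in place of exact unit variance.
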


Since this seminal  work,  a lot of effort has been devoted in finding distributions  other than in a fixed chaos for which a ``fourth moment theorem" would still hold. See the survey \cite{NoPe3} and the monograph \cite{NoPe4} for details and references.  More recent developments can be found in the webpage maintained by Ivan Nourdin 
$$https://sites.google.com/site/malliavinstein/home$$

As an important example for us, in the free probability setting ,  it was proved by Kemp et al. \cite{KNPS} that the Nualart-Peccati criterion also holds for
the free Brownian motion $\{S_t;t\geq0\}$ and its multiple Wigner integrals $I_m^S(f)$.  

\begin{thm}[\cite{KNPS}] \label{T2}
Let $\{X_n;n\geq1\}$ be a sequence of multiple Wigner integrals of the form $X_n=I_m^S(f_n)$ in a fixed $m$-chaos with $E[X_n^2] \rightarrow 1$ and denote by $\mathcal{S}(0,1)$
 a centered Semicircle law with unit variance.  Then the following are equivalent 
\begin{enumerate}
\item $ E[X_n^4] \rightarrow 2$
 \item $\mu_{X_n}\rightarrow \mathcal{S}(0,1)$
\end{enumerate}
\end{thm}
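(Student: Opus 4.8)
Throughout write $\tau$ for the trace of the underlying free probability space (denoted $E$ in the statement), recall that $\mathcal{S}(0,1)$ is compactly supported and hence determined by its moments $\tau(\mathcal{S}^{2p}) = C_p$ (the $p$-th Catalan number) and $\tau(\mathcal{S}^{2p+1}) = 0$, and normalize $f_n$ to be mirror-symmetric so that $X_n = I_m^S(f_n)$ is self-adjoint. The plan is to prove the nontrivial implication $(1)\Rightarrow(2)$ by the method of moments, the two basic tools being the product formula for multiple Wigner integrals,
$$I_p(f)\, I_q(g) = \sum_{r=0}^{p\wedge q} I_{p+q-2r}\big(f \stackrel{r}{\frown} g\big),$$
where $f \stackrel{r}{\frown} g$ is the $r$-th nested contraction, and the isometry/orthogonality relation $\tau\big(I_a(u)\,I_b(v)\big) = \delta_{a,b}\,\langle u, v^*\rangle_{L^2}$. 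The decisive structural feature, in contrast with the classical Wiener product formula, is that no combinatorial multiplicities appear: only \emph{non-crossing} contractions survive, which is exactly what forces a semicircular, rather than Gaussian, limit.

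The first step is a fourth-moment identity. Expanding $X_n^2 = \sum_{r=0}^{m} I_{2m-2r}\big(f_n \stackrel{r}{\frown} f_n\big)$ with the product formula, computing $\tau(X_n^4) = \tau\big(X_n^2\cdot X_n^2\big)$, and using orthogonality to annihilate all terms of unequal order (so that only the diagonal $r=s$ survives), one obtains
$$E[X_n^4] = 2\,\|f_n\|_{L^2}^4 + \sum_{r=1}^{m-1} \big\| f_n \stackrel{r}{\frown} f_n\big\|_{L^2}^2,$$
where I have used that $f_n \stackrel{r}{\frown} f_n$ is again mirror-symmetric (so that each diagonal term is a genuine squared norm) and that the extreme contractions $r=0$ and $r=m$ each contribute $\|f_n\|_{L^2}^4$. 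Since $E[X_n^2] = \|f_n\|_{L^2}^2 \to 1$, this identity shows that $E[X_n^4] \to 2$ holds \emph{if and only if} $\|f_n \stackrel{r}{\frown} f_n\|_{L^2} \to 0$ for every $r = 1, \ldots, m-1$; in other words the fourth-moment gap is, up to the second-moment normalization, exactly the total mass of the nontrivial contractions.

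The second step promotes the vanishing of contractions to convergence of all moments. Iterating the product formula expresses $\tau(X_n^k)$ as a finite sum, indexed by the ways of successively contracting the $k$ copies of $f_n$, of scalar integrals built from $f_n$. The terms indexed by non-crossing pair partitions each equal $\|f_n\|_{L^2}^k$, and there are $C_{k/2}$ of them when $k$ is even and none when $k$ is odd; every remaining term involves at least one nontrivial contraction $f_n \stackrel{r}{\frown} f_n$ $(1\le r\le m-1)$ and is therefore bounded, by Cauchy--Schwarz together with the a priori bound $\|f_n\|_{L^2}\to 1$, by a quantity that tends to $0$. Hence $\tau(X_n^{2p}) \to C_p$ and $\tau(X_n^{2p+1}) \to 0$, and moment-determinacy of $\mathcal{S}(0,1)$ yields $\mu_{X_n} \to \mathcal{S}(0,1)$. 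The converse $(2)\Rightarrow(1)$ is the soft direction: distributional convergence together with the hypercontractivity of a fixed Wigner chaos (which bounds higher moments uniformly and thus upgrades weak convergence to convergence of the fourth moment) gives $E[X_n^4] \to \tau(\mathcal{S}^4) = 2$.

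I expect the genuine obstacle to lie in the second step, namely in the clean bookkeeping of the non-crossing partition combinatorics generated by iterating the product formula and in the uniform estimate showing that every diagram carrying a partial contraction is $O\big(\max_r \|f_n \stackrel{r}{\frown} f_n\|_{L^2}\big)$. The first (fourth-moment) step is essentially an algebraic identity, but controlling the full moment expansion requires isolating precisely which diagrams fail to be non-crossing pairings and verifying that each of them is forced to contain a factor controlled by a vanishing contraction norm.
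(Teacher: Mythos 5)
You should first note that the paper you are being compared against contains no proof of this statement: Theorem \ref{T2} is quoted as background from \cite{KNPS}, so the only meaningful comparison is with the original source. Your sketch is correct, and its algebraic ingredients are exactly those established in \cite{KNPS}: the coefficient-free product formula, the identity $E[X_n^4]=2\|f_n\|_{L^2}^4+\sum_{r=1}^{m-1}\|f_n\stackrel{r}{\frown}f_n\|_{L^2}^2$ for mirror-symmetric kernels, and, for the soft direction, a uniform bound on a fixed Wigner chaos (in the free setting one can even use the Haagerup--Bo\.{z}ejko operator-norm bound $\|I_m^S(f)\|\leq (m+1)\|f\|_{L^2}$, which makes the supports uniformly bounded, so weak convergence upgrades to moment convergence). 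Where you genuinely diverge from \cite{KNPS} is the hard implication: they do not run the method of moments, but instead develop a free Malliavin calculus and a free analogue of Stein's method to prove a quantitative estimate of the form $d(I_m^S(f),\mathcal{S})\leq C\sqrt{E[I_m^S(f)^4]-2}$ in a smooth-test-function metric that controls convergence in law, and the qualitative theorem is a corollary. Your moments route does work and is more elementary, but its one nontrivial ingredient --- which you correctly flag --- is the lemma that every term of the moment expansion not corresponding to a complete pairing of the $k$ blocks is bounded by a quantity of the form $\|f_n\|_{L^2}^{k-2}\max_{1\leq r\leq m-1}\|f_n\stackrel{r}{\frown}f_n\|_{L^2}$; without a proof of this your argument is a correct outline rather than a proof. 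One correction to your bookkeeping: in the Wigner moment expansion, crossing diagrams never appear at all (freeness annihilates them), so the dichotomy you must isolate is not ``non-crossing versus crossing'' but ``respectful non-crossing pairings that restrict to a complete pairing of the blocks (these give the Catalan count $C_{k/2}\|f_n\|_{L^2}^k$) versus those that split some block among two or more others (these carry a contraction factor).'' Finally, note the trade-off: your route gives only the qualitative equivalence, whereas the Malliavin--Stein route of \cite{KNPS} yields the quantitative bound, which is precisely the kind of statement the present paper pursues in the infinitely divisible setting.
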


More recently, in \cite{Ar1} the first author proved analogous results to Theorem \ref{T1}  and  Theorem \ref{T2} in the setting of infinitely divisible laws.  Let $ID(*)$ and $ID(\boxplus)$ denote the classes of probability measures which are infinitely divisible with respect to classical convolution  $*$  and free convolution $\boxplus$, respectively. 

\begin{thm}[\cite{Ar1}]\label{T3} Let $\{\mu_n=\mu_{X_n};n\geq1\}$ be a sequence of probability measures with variance $1$ and mean $0$ such that $\mu_n\in ID(*)$. If $ E[X_n^4] \rightarrow 3$
then $\mu_{X_n}\rightarrow \mathcal{N}(0,1)$.
\end{thm}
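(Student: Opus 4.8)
The plan is to work directly with the L\'evy--Khintchine representation of the characteristic function and to reduce the fourth--moment hypothesis to a statement about the L\'evy measure. Since each $\mu_n\in ID(*)$ has mean $0$, variance $1$ and finite fourth moment, its characteristic function can be written as $\hat\mu_n(t)=\exp(\psi_n(t))$ with
$$\psi_n(t)=-\frac{\gamma_n t^2}{2}+\int_{\R}\left(e^{itx}-1-itx\right)\nu_n(dx),$$
where $\gamma_n\ge 0$ is the Gaussian part and $\nu_n$ the L\'evy measure, subject to the variance constraint $\gamma_n+\int_{\R}x^2\,\nu_n(dx)=1$. Expanding $\log\hat\mu_n$ in powers of $t$ identifies the cumulants as $\kappa_2(X_n)=1$, $\kappa_3(X_n)=\int_{\R}x^3\,\nu_n(dx)$ and $\kappa_4(X_n)=\int_{\R}x^4\,\nu_n(dx)$. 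The first step is then the elementary moment--cumulant identity $E[X_n^4]=\kappa_4(X_n)+3\kappa_2(X_n)^2=\kappa_4(X_n)+3$, so the hypothesis $E[X_n^4]\to 3$ is \emph{exactly} $\int_{\R}x^4\,\nu_n(dx)\to 0$.

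The key step is a cancellation. I would compare $\psi_n$ with the Gaussian exponent $\psi(t)=-t^2/2$ and use the variance constraint to rewrite $\tfrac{t^2}{2}(1-\gamma_n)=\tfrac{t^2}{2}\int_{\R}x^2\,\nu_n(dx)$, obtaining
$$\psi_n(t)-\psi(t)=\int_{\R}\left(e^{itx}-1-itx+\frac{t^2x^2}{2}\right)\nu_n(dx).$$
The point is that the (possibly non-vanishing) second-moment contribution of $\nu_n$ is absorbed exactly, leaving an integrand that is $O(|tx|^3)$. Splitting off the cubic term and bounding the remainder by $\tfrac{1}{24}t^4x^4$ gives
$$\left|\psi_n(t)-\psi(t)\right|\le\frac{|t|^3}{6}\,|\kappa_3(X_n)|+\frac{t^4}{24}\,\kappa_4(X_n).$$
Since $\int_{\R}x^2\,\nu_n(dx)\le 1$, Cauchy--Schwarz yields $|\kappa_3(X_n)|\le(\int_{\R}x^2\,\nu_n(dx))^{1/2}(\int_{\R}x^4\,\nu_n(dx))^{1/2}\le\kappa_4(X_n)^{1/2}$, so both error terms are controlled by powers of $\kappa_4(X_n)=E[X_n^4]-3$.

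To conclude, I would pass from the exponents to the characteristic functions via the inequality $|e^a-e^b|\le|a-b|\max(e^{\mathrm{Re}\,a},e^{\mathrm{Re}\,b})$; since $\mathrm{Re}\,\psi_n\le 0$ and $\mathrm{Re}\,\psi\le 0$ this gives $|\hat\mu_n(t)-e^{-t^2/2}|\le|\psi_n(t)-\psi(t)|$. The qualitative statement is then immediate: for each fixed $t$ the right-hand side tends to $0$, so $\hat\mu_n(t)\to e^{-t^2/2}$ pointwise and L\'evy's continuity theorem gives $\mu_{X_n}\to\mathcal{N}(0,1)$. For the quantitative version I would instead feed the bound into Esseen's smoothing inequality, integrating $|\hat\mu_n(t)-e^{-t^2/2}|/t$ over $[-T,T]$ and adding the $O(1/T)$ truncation error; optimizing over $T$ produces a Kolmogorov-distance bound of the form $C\,(E[X_n^4]-3)^{1/8}$.

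\textbf{The main obstacle.} I expect the only genuine subtlety to be the non-vanishing of $\int_{\R}x^2\,\nu_n(dx)$: the fourth-moment hypothesis controls only the large jumps of the L\'evy measure, while the small jumps may carry an order-one share of the variance. The cancellation in the displayed identity for $\psi_n-\psi$ is exactly what tames this, since the small-jump variance is reassigned to the Gaussian exponent and only the third- and fourth-order contributions, both $o(1)$, survive. Obtaining clean constants in Esseen's inequality---and, should a sharper exponent than $1/8$ be desired, exploiting the Gaussian-type decay of $|\hat\mu_n|$ for large $t$---is the remaining, more technical, part.
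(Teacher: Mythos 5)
Your proof is correct, but it takes a genuinely different route from the paper's. The paper never writes down a L\'evy--Khintchine representation: it exploits divisibility probabilistically, representing $S\sim\mu_n$ as an i.i.d.\ sum $X_1+\cdots+X_N$, applying the classical Berry--Esseen theorem to $S=\frac{1}{\sqrt{N}}\sum_{k}\sqrt{N}X_k$, and controlling $\E[|\sqrt{N}X_1|^3]$ by H\"older's inequality combined with additivity of cumulants ($c_2(\sqrt{N}X_1)=1$, $c_4(\sqrt{N}X_1)=Nc_4(S)$); this yields $d_{Kol}(\mu,\Phi)\le C\sqrt{m_4(\mu)-3+3/N}$ for every $N$-divisible law (Theorem \ref{t:classical_kolmogorov}), from which Theorem \ref{T3} and its quantitative refinement Theorem \ref{T5} both follow. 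Your argument instead compares characteristic exponents directly, with the key cancellation that reassigns the small-jump variance $\int x^2\,\nu_n(dx)$ to the Gaussian exponent; all the steps check out (the finite-variance representation with $\gamma_n+\int x^2\nu_n(dx)=1$, the identities $\kappa_4=\int x^4\nu_n(dx)=m_4-3$, the bound $|\hat\mu_n(t)-e^{-t^2/2}|\le\frac{|t|^3}{6}|\kappa_3|+\frac{t^4}{24}\kappa_4$ with $|\kappa_3|\le\kappa_4^{1/2}$, and L\'evy continuity). The trade-offs are as follows. Your route is self-contained (no Berry--Esseen input) and makes the extremality $m_4\ge 3$ completely transparent, since $m_4-3=\int x^4\nu_n(dx)\ge 0$ --- a fact the paper establishes separately in its appendix via kurtosis. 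On the other hand, it consumes the full strength of infinite divisibility: merely $n$-divisible laws admit no L\'evy--Khintchine representation, so your method cannot give the paper's stronger Theorem \ref{T7}, which the i.i.d.-sum argument gets for free. Quantitatively the paper's route also wins: it produces the Kolmogorov rate $C\sqrt{m_4-3}$, whereas your Esseen-smoothing step, as set up, delivers only the exponent $1/8$ (balancing $T^3\kappa_4^{1/2}$ against the $1/T$ truncation error); closing that gap would require exploiting the decay of $|\hat\mu_n(t)|$ for large $t$ rather than a naive truncation, which is precisely the hard analysis that Berry--Esseen packages for you.
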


\begin{thm}[\cite{Ar1}]\label{T4} Let $\{\mu_n=\mu_{X_n};n\geq1\}$ be a sequence of probability measures with variance $1$ and mean $0$ such that $\mu_n\in ID(\boxplus)$. If $ E[X_n^4] \rightarrow 2$
then $\mu_{X_n}\rightarrow \mathcal{S}(0,1)$. 
\end{thm}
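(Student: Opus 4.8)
The plan is to encode infinite divisibility through the free cumulants, reduce the fourth-moment hypothesis to a second-moment condition on the free L\'evy measure, and then transfer the resulting convergence back to $\mu_n$. I would begin with the free moment--cumulant relation at order four. Writing $\kappa_m=\kappa_m(\mu_n)$ for the free cumulants of $X_n$ and using that $X_n$ has mean zero and variance one, so that $\kappa_1=0$ and $\kappa_2=1$, every non-crossing partition of $\{1,2,3,4\}$ containing a singleton drops out of the expansion of $E[X_n^4]$, leaving only the full block and the two non-crossing pairings:
\[
E[X_n^4]=\kappa_4+2\kappa_2^2=\kappa_4+2 .
\]
Hence the hypothesis $E[X_n^4]\to 2$ is exactly $\kappa_4(\mu_n)\to 0$, and in particular $X_n$ has a finite fourth free cumulant for $n$ large.

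Next I would invoke the free L\'evy--Khintchine representation. For a member of $ID(\boxplus)$ with mean zero and finite variance there is a finite positive measure $\rho_n$ on $\R$ with $\kappa_m(\mu_n)=\int_{\R}t^{\,m-2}\,d\rho_n(t)$ for all $m\ge 2$; equivalently, up to the vanishing drift term $\kappa_1 z$, the free cumulant transform of $\mu_n$ is $z\mapsto z^2\int_{\R}(1-tz)^{-1}\,d\rho_n(t)$. Since $\kappa_2=\rho_n(\R)=1$, each $\rho_n$ is a probability measure, and $\kappa_4(\mu_n)=\int_{\R}t^2\,d\rho_n(t)$ is precisely its second moment. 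The previous step therefore reads $\int_{\R}t^2\,d\rho_n\to 0$, and Chebyshev's inequality gives $\rho_n(\{|t|>\varepsilon\})\le \varepsilon^{-2}\int t^2\,d\rho_n\to 0$ for every $\varepsilon>0$, so that $\rho_n\to\delta_0$ weakly.

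Finally I would transfer this to the laws themselves. The semicircle $\mathcal{S}(0,1)$ is the element of $ID(\boxplus)$ whose only nonzero free cumulant is $\kappa_2=1$, i.e.\ the one with free L\'evy measure $\delta_0$ and free cumulant transform $z\mapsto z^2$. Because the mean-zero, variance-one normalization fixes the drift, the free cumulant transform of $\mu_n$ converges to that of $\mathcal{S}(0,1)$ as soon as $\rho_n\to\delta_0$, and by the Bercovici--Voiculescu continuity theorem this yields $\mu_n\to\mathcal{S}(0,1)$ weakly, which is the claim of Theorem \ref{T4}.

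I expect the last step to be the main obstacle. Weak convergence $\rho_n\to\delta_0$ only controls the free L\'evy measures on bounded sets, so one must rule out escape of mass to infinity in order to obtain genuine convergence of the transforms on a truncated cone in $\mathbb{C}$. Here this is automatic: the $\rho_n$ are probability measures whose second moments tend to zero, which gives uniform integrability of $t^2\,d\rho_n$ and hence uniform control of $\int_{\R}(1-tz)^{-1}\,d\rho_n(t)$ for $z$ near the origin. Making this uniformity precise is the real analytic content, and it is also exactly the point at which, following the program of the present paper, one would instead extract an explicit rate, bounding the relevant Cauchy-transform difference by a power of $\int_{\R}t^2\,d\rho_n=E[X_n^4]-2$ and inverting it into a quantitative Kolmogorov-distance estimate.
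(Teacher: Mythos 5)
Your argument is correct, but it takes a genuinely different route from the one this paper uses. The paper proves Theorem \ref{T4} as a corollary of its quantitative Theorem \ref{t:free_kolmogorov}: writing a free $N$-divisible law as $S=X_1+\cdots+X_N$ with $X_i$ free i.i.d., it applies the free Berry--Esseen inequality of Chistyakov and G\"otze \eqref{berry-esseen-free} to this decomposition, converts the resulting moment bound into free cumulants via \eqref{freemomentcumulants} and the additivity of cumulants, and arrives at the explicit estimate $d_{Kol}(\mu,F_w)\leq 2K\sqrt{m_4-2+\tfrac{2}{N}}$; Theorem \ref{T4} then follows by letting $N\to\infty$, since the bound tends to $0$ when $m_4(\mu_n)\to 2$. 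You instead encode infinite divisibility through the finite-variance (Kolmogorov-type) free L\'evy--Khintchine representation $\kappa_m=\int_{\R}t^{m-2}\,d\rho_n(t)$, reduce the hypothesis to $\int t^2\,d\rho_n\to 0$, hence $\rho_n\to\delta_0$, and conclude via the Bercovici--Voiculescu continuity theorem. The analytic uniformity you flag at the end is a genuine point but resolves exactly as you indicate; it is perhaps cleanest in terms of the Voiculescu transform, where your representation reads $\phi_{\mu_n}(z)=G_{\rho_n}(z)$ (the Cauchy transform of $\rho_n$), so that $|\phi_{\mu_n}(z)|\leq 1/\mathrm{Im}(z)$ gives the uniform $o(|z|)$ condition of the continuity theorem, while weak convergence of the probability measures $\rho_n$ to $\delta_0$ gives locally uniform convergence of $G_{\rho_n}$ to $1/z$, the Voiculescu transform of $\mathcal{S}(0,1)$. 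As for what each approach buys: the paper's route is quantitative --- it produces the Kolmogorov rate that is the main contribution of this note --- and it uses only $N$-divisibility, so it simultaneously proves the stronger Theorem \ref{T8}; your route is qualitative, gives no rate, and requires full free infinite divisibility (an $n$-divisible measure need not admit the representation by $\rho_n$), so it cannot yield Theorem \ref{T8}. On the other hand it avoids the deep free Berry--Esseen theorem, relying only on standard analytic machinery of free convolution, and it is essentially the method of the original reference \cite{Ar1} from which Theorem \ref{T4} is quoted.
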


In this note we give quantitative versions of Theorems \ref{T3} and \ref{T4}. That is, we give precise estimates for the Kolmogorov distance between an infinitely divisible measure  $\mu$ and $ \mathcal{N}(0,1)$ ( resp. $ \mathcal{S}(0,1)$)  in terms of the fourth moment.

\begin{thm}\label{T5} Let $\mu\in ID(*)$ be a probability measures with variance $1$ and mean $0$.  Then 
$$d_{kol}(N(0,1), \mu)\leq C \sqrt{m_4-3},$$
where $m_4$ denotes the fourth moment of $\mu$ and C is a universal constant.
\end{thm}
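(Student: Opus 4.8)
The plan is to work entirely on the level of characteristic functions and to feed a carefully chosen pointwise estimate into Esseen's smoothing inequality. Write $\varphi_\mu$ for the characteristic function of $\mu$ and $\varphi_N(t)=e^{-t^2/2}$ for that of $\mathcal N(0,1)$. By the L\'evy--Khintchine representation, and using that $\mu$ has mean $0$ and variance $1$, there are $a^2\ge 0$ and a L\'evy measure $\nu$ with
$$\log\varphi_\mu(t)=-\frac{a^2t^2}{2}+\int_{\mathbb R}\Big(e^{itx}-1-itx\Big)\,d\nu(x),\qquad a^2+\int_{\mathbb R}x^2\,d\nu(x)=1.$$
The first observation is a cumulant identity: since the mean vanishes and the variance equals one, the fourth moment and fourth cumulant satisfy $m_4=\kappa_4+3$, while for an infinitely divisible law $\kappa_4=\int_{\mathbb R}x^4\,d\nu(x)$. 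Hence (assuming as we may that $m_4<\infty$) we have $m_4-3=\int x^4\,d\nu=:\kappa_4$, and the claim becomes $d_{kol}(\mathcal N(0,1),\mu)\le C\kappa_4^{1/2}$. Setting $g(t):=\log\varphi_\mu(t)-\log\varphi_N(t)=\int(e^{itx}-1-itx+\tfrac{t^2x^2}{2})\,d\nu(x)$, the Taylor bound $|e^{iu}-1-iu+\tfrac{u^2}{2}|\le |u|^3/6$ together with the Cauchy--Schwarz estimate $\int|x|^3\,d\nu\le(\int x^2\,d\nu)^{1/2}(\int x^4\,d\nu)^{1/2}\le\kappa_4^{1/2}$ (using $\int x^2\,d\nu\le 1$) yields $|g(t)|\le \tfrac{|t|^3}{6}\kappa_4^{1/2}$. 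Moreover $\mathrm{Re}\,g=\int(\cos tx-1+\tfrac{t^2x^2}{2})\,d\nu\ge 0$, so the elementary inequality $|e^{g}-1|\le |g|\,e^{\mathrm{Re}\,g}$, combined with $\mathrm{Re}\,g=\log|\varphi_\mu|+\tfrac{t^2}{2}(1-a^2)$, gives the key pointwise bound $|\varphi_\mu(t)-\varphi_N(t)|\le |g(t)|\,|\varphi_\mu(t)|\,e^{-a^2t^2/2}\le|g(t)|\,|\varphi_\mu(t)|$.

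Next I would invoke Esseen's smoothing inequality: for every $T>0$,
$$d_{kol}(\mathcal N(0,1),\mu)\le \frac{1}{\pi}\int_{-T}^{T}\frac{|\varphi_\mu(t)-\varphi_N(t)|}{|t|}\,dt+\frac{24}{\pi T}\,\frac{1}{\sqrt{2\pi}},$$
the last term coming from the supremum $(2\pi)^{-1/2}$ of the standard Gaussian density. To make the boundary term of order $\kappa_4^{1/2}$ we are forced to take $T$ of order $\kappa_4^{-1/2}$; concretely I would set $T=\tfrac12\kappa_4^{-1/2}$. The point is then that, once we know a \emph{uniform Gaussian decay} $|\varphi_\mu(t)|\le e^{-c't^2}$ throughout $|t|\le T$ for a universal $c'>0$, the integrand is dominated by $\tfrac{|t|^3}{6}\kappa_4^{1/2}e^{-c't^2}/|t|$, so that
$$\frac{1}{\pi}\int_{-T}^{T}\frac{|\varphi_\mu-\varphi_N|}{|t|}\,dt\le \frac{\kappa_4^{1/2}}{6\pi}\int_{\mathbb R}t^2e^{-c't^2}\,dt\le C\kappa_4^{1/2},$$
while the boundary term equals $\tfrac{24}{\pi T}(2\pi)^{-1/2}=C'\kappa_4^{1/2}$, and the proof is complete.

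The hard part is exactly this uniform decay of $|\varphi_\mu|$ on $|t|\le T=\tfrac12\kappa_4^{-1/2}$, since $a^2$ may vanish and $\nu$ may be atomic, so that $|\varphi_\mu|$ need not tend to $0$ at infinity and in fact has recurrences. I would prove it by a jump-size truncation tied to the fourth moment. Split $\nu$ at the threshold $\delta:=2\kappa_4^{1/2}=1/T$. By Chebyshev the large jumps carry little variance, $\int_{|x|>\delta}x^2\,d\nu\le\delta^{-2}\int_{|x|>\delta}x^4\,d\nu\le\kappa_4/\delta^2=\tfrac14$, so most of $\int x^2\,d\nu=1-a^2$ comes from $|x|\le\delta$. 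For $|t|\le T$ and $|x|\le\delta$ one has $|tx|\le 1$, where $1-\cos(tx)\ge\tfrac{11}{24}(tx)^2$; inserting this into $\log|\varphi_\mu(t)|=-\tfrac{a^2t^2}{2}-\int(1-\cos tx)\,d\nu$ and keeping only the small-jump part, a short case distinction on the size of $a^2$ (the explicit Gaussian part covering the regime $a^2$ close to $1$) gives $\log|\varphi_\mu(t)|\le -c'\,t^2$ with, for instance, $c'=\tfrac{11}{32}$, uniformly on $|t|\le T$. Feeding this back into the previous paragraph finishes the argument. The conceptual role of the fourth moment is thus twofold: it supplies the small factor $\kappa_4^{1/2}$ in the estimate for $|g|$, and, through the variance normalization, it forces $\nu$ to concentrate on jumps of size $O(\kappa_4^{1/2})$, which is precisely what makes the Gaussian decay persist all the way out to the cutoff $T\sim\kappa_4^{-1/2}$ demanded by Esseen's inequality.
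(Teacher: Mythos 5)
Your proof is correct, but it takes a genuinely different route from the paper's. The paper never touches the L\'evy--Khintchine representation: it uses $N$-divisibility structurally, writing $S\stackrel{law}{=}X_1+\cdots+X_N$ with i.i.d.\ summands, applying the classical Berry--Esseen theorem to $\frac{1}{\sqrt{N}}\sum_{k=1}^N\sqrt{N}X_k$, and controlling $\E[|\sqrt{N}X_1|^3]$ by H\"older together with additivity of cumulants (so that $c_4(\sqrt{N}X_1)=Nc_4(\mu)=N(m_4-3)$); this yields $d_{Kol}(\mu,\Phi)\le C\sqrt{m_4-3+3/N}$ for any $N$-divisible law (Theorem \ref{t:classical_kolmogorov}), and Theorem \ref{T5} follows by letting $N\to\infty$. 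You instead work with characteristic functions: L\'evy--Khintchine identifies $m_4-3=\int x^4\,d\nu\ge 0$, Taylor expansion plus Cauchy--Schwarz gives $|g(t)|\le\frac{|t|^3}{6}\sqrt{m_4-3}$, and Esseen's smoothing lemma with cutoff $T\sim(m_4-3)^{-1/2}$ concludes. What each buys: the paper's argument is shorter (taking Berry--Esseen as a black box), produces the sharper finite-$N$ statement and hence also Theorem \ref{T7}, and inherits the explicit constant $C\le 0.4748$; yours is self-contained modulo the elementary smoothing inequality, applies to $ID(*)$ in one stroke without approximating through convolution roots, and makes the nonnegativity of $m_4-3$ transparent. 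Two local comments on your write-up. First, the identity $\mathrm{Re}\,g=\log|\varphi_\mu|+\frac{t^2}{2}(1-a^2)$ is off: since $a^2+\int x^2\,d\nu=1$, one has $\mathrm{Re}\,g=\log|\varphi_\mu|+\frac{t^2}{2}$; this is harmless, because the bound you actually use, $|\varphi_\mu-\varphi_N|\le|g|\,|\varphi_\mu|$, is exactly what the corrected identity delivers. Second, your ``hard part'' is unnecessary: on $|t|\le T=\frac12(m_4-3)^{-1/2}$ you already have $\mathrm{Re}\,g(t)\le|g(t)|\le\frac{|t|^3}{6}\sqrt{m_4-3}\le\frac{t^2}{12}$, whence $|\varphi_\mu(t)|=e^{-t^2/2+\mathrm{Re}\,g(t)}\le e^{-5t^2/12}$ on that interval; the truncation of $\nu$ at $\delta=2\sqrt{m_4-3}$ and the case distinction on $a^2$ can simply be deleted.
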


\begin{thm}\label{T6} Let $\mu\in ID(\boxplus)$ be a probability measures with variance $1$ and mean $0$.  Then 
$$d_{kol}(S(0,1), \mu)\leq K \sqrt{m_4-2},$$
where $m_4$ denotes the fourth moment of $\mu$ and K is a universal constant.
\end{thm}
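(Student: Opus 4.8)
The plan is to reduce the problem to a comparison of Cauchy transforms, and then to invoke a Berry--Esseen--type smoothing inequality. The starting point is the free L\'evy--Khintchine representation: since $\mu\in ID(\boxplus)$ has mean $0$ and variance $1$, its free cumulants are $\kappa_1=0$, $\kappa_2=1$, and $\kappa_{n+2}=\int_\R t^n\,d\sigma(t)$ for a probability measure $\sigma$ on $\R$, so that the $R$-transform reads
\[
R_\mu(z)=z\int_\R\frac{d\sigma(t)}{1-tz}.
\]
The first thing I would record is the exact algebraic meaning of the hypothesis: expanding the fourth moment over non-crossing partitions and using $\kappa_1=0$, $\kappa_2=1$ gives $m_4=\kappa_4+2\kappa_2^2=\kappa_4+2$, whence
\[
\varepsilon:=m_4-2=\kappa_4=\int_\R t^2\,d\sigma(t)\ge 0.
\]
This identifies $\varepsilon$ as the second moment of $\sigma$ and shows that $\varepsilon=0$ forces $\sigma=\delta_0$, i.e. $R_\mu(z)=z$, the semicircle law; so $\varepsilon$ is exactly the right defect to control. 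By Cauchy--Schwarz one also gets $|\kappa_3|=\big|\int t\,d\sigma\big|\le\sqrt{\varepsilon}$ and $\int|t|\,d\sigma\le\sqrt{\varepsilon}$, the moment bounds I will feed into the transform estimate.

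Next I would compare the Cauchy transforms $G_\mu$ and $G_{sc}$ on a horizontal line $z=x+iv$. Using that $G_\mu$ inverts $K_\mu(w)=1/w+R_\mu(w)$ and $G_{sc}$ inverts $K_{sc}(w)=1/w+w$, subtracting the identities $z=1/G_\mu+R_\mu(G_\mu)$ and $z=1/G_{sc}+G_{sc}$ yields, with $\delta=G_\mu(z)-G_{sc}(z)$,
\[
\delta\Big(1-\tfrac{1}{G_\mu(z)G_{sc}(z)}\Big)=-\big(R_\mu(G_\mu(z))-G_\mu(z)\big),
\]
so that $\delta$ is governed by the small quantity $D(w):=R_\mu(w)-w=w^2\int_\R\frac{t}{1-tw}\,d\sigma(t)$ evaluated at $w=G_\mu(z)$. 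Using $|G_\mu(z)|\le 1/v$, the elementary bound $|1-tw|\ge |t|\,|\mathrm{Im}\,w|$ for real $t$, and the Cauchy--Schwarz moment estimates above, I would derive a bound of the form $|D(G_\mu(z))|\le C\,\varepsilon^{1/2}\,p(1/v)$ for an explicit power $p$, and combine it with a lower bound on $|1-1/(G_\mu G_{sc})|$ to obtain $|G_\mu(x+iv)-G_{sc}(x+iv)|\le C\,\varepsilon^{1/2}v^{-\alpha}$ along the line.

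Finally I would invoke a smoothing (Berry--Esseen) inequality bounding the Kolmogorov distance by an integral of the Cauchy-transform difference along the line $\mathrm{Im}\,z=v$ plus an error controlled by the modulus of continuity of the semicircle distribution function, which is Lipschitz with constant $1/\pi$; schematically
\[
d_{kol}(S(0,1),\mu)\le C\int_\R\big|G_\mu(x+iv)-G_{sc}(x+iv)\big|\,dx+C'v.
\]
Plugging in the previous estimate gives a bound of the form $C\,\varepsilon^{1/2}v^{-\beta}+C'v$; optimizing over $v$ then produces the claimed $K\sqrt{m_4-2}$, the integrability in $x$ being ensured by the $O(|x|^{-2})$ decay of $G_\mu-G_{sc}$ coming from the matching first two moments.

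The main obstacle I anticipate is the transform comparison: controlling $D(G_\mu(z))=G_\mu(z)^2\int\frac{t\,d\sigma(t)}{1-tG_\mu(z)}$ \emph{uniformly} in $x$, because the denominator $1-t\,G_\mu(z)$ degenerates when $\mathrm{Im}\,G_\mu(z)$ is small (which happens for $x$ far from the bulk) and $\sigma$ may have unbounded support. Making the $v$-dependence sharp enough that the final optimization yields the \emph{square-root} rate, rather than a worse power, is the delicate calibration: one needs the integrated transform estimate to scale like $\varepsilon^{1/2}v^{-1}$ so that balancing it against the $O(v)$ smoothing error gives $\varepsilon^{1/2}$. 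Handling the tails of $\mu$, whose only a priori control is the finite fourth moment, is where I expect the technical effort to concentrate.
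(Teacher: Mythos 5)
Your algebraic reduction is correct and is a nice observation: for $\mu\in ID(\boxplus)$ with $\kappa_1=0$, $\kappa_2=1$, the free L\'evy--Khintchine representation gives $\kappa_{n+2}=\int t^n\,d\sigma(t)$ with $\sigma$ a probability measure, so $\varepsilon=m_4-2=\kappa_4=\int t^2\,d\sigma\ge0$ and $|\kappa_3|\le\sqrt{\varepsilon}$. But the analytic core of your plan has a genuine gap, and it sits exactly where you did \emph{not} place it. The degeneration you worry about, that of $1-tG_\mu(z)$ when $\mathrm{Im}\,G_\mu(z)$ is small, is in fact harmless: writing $w=G_\mu(x+iv)$, the Cauchy--Schwarz inequality for $\mu$ gives $|w|^2\le|\mathrm{Im}\,w|/v$, and combining this with $|1-tw|\ge|t|\,|\mathrm{Im}\,w|$ and $|\kappa_3|,\int|t|\,d\sigma\le\sqrt{\varepsilon}$ yields $|D(w)|\le 2\sqrt{\varepsilon}/v^2$ uniformly in $x$. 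The real obstruction is the other factor in your identity: you need a lower bound on $|1-1/(G_\mu(z)G_{sc}(z))|$, and this quantity genuinely degenerates near the spectral edges $z\approx\pm2$, where $G_{sc}\to\pm1$ and (precisely in the regime of interest, $\mu$ close to semicircular) $G_\mu G_{sc}\to1$ with square-root behaviour $|G_{sc}^2-1|\sim|z\mp2|^{1/2}$. Producing a usable lower bound there requires an a priori comparison of $G_\mu$ with $G_{sc}$ (a bootstrap), and this edge stability analysis is exactly the technical core of the Chistyakov--G\"otze proof of the free Berry--Esseen theorem; nothing in your sketch supplies it.

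Moreover, your calibration target is arithmetically insufficient even if granted: balancing $\varepsilon^{1/2}v^{-1}$ against the $O(v)$ smoothing error gives $v=\varepsilon^{1/4}$ and a final bound of order $\varepsilon^{1/4}$, not $\varepsilon^{1/2}$. To reach the square-root rate through a Bai-type smoothing inequality you would need $\int_\R|G_\mu(x+iv)-G_{sc}(x+iv)|\,dx\lesssim\sqrt{\varepsilon}$ \emph{uniformly in} $v$, and then take $v\sim\sqrt{\varepsilon}$; that uniformity again hinges on the missing edge analysis (the singularity $|x\mp2|^{-1/2}$ is integrable, which is why the claim is plausible, but it must be proved with $G_\mu$ in place of one $G_{sc}$). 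By contrast, the paper avoids all of this by black-boxing the hard analysis into the free Berry--Esseen theorem: since $\mu=\mu_N^{\boxplus N}$ for every $N$, write $S=N^{-1/2}\sum_{k=1}^{N}\sqrt{N}X_k$ with the $X_k$ free and i.i.d., apply \eqref{berry-esseen-free}, compute $m_4(\sqrt{N}X_1)=Nk_4(S)+2k_2(\sqrt{N}X_1)^2=N(m_4-2)+2$ by free cumulant additivity together with H\"older's inequality for the third absolute moment, and let $N\to\infty$; this yields $d_{Kol}(\mu,F_w)\le 2K\sqrt{m_4-2}$ in a few lines. If you keep your transform-based route, you are in effect re-proving the free Berry--Esseen theorem, and the edge stability estimate will be the bulk of the work.
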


The proof of Theorems \ref{T5} and \ref{T6} relies on the Berry-Esseen Theorem and its free version (See Section 2.3). Furthermore, we can prove slightly stronger versions of Theorems \ref{T3} and \ref{T4}, changing infinite divisibility by just $n$-divisibility.

\begin{thm}\label{T7} Let $\{\mu_n=\mu_{X_n};n\geq1\}$ be a sequence of probability measures with variance $1$ and mean $0$ such that $\mu_n$ is $n$-divisible with respect to classical convolution. If $ E[X_n^4] \rightarrow 3$
then $\mu_{X_n}\rightarrow \mathcal{N}(0,1)$.
\end{thm}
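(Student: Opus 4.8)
The plan is to turn the $n$-divisibility hypothesis into a triangular-array representation of $X_n$, to translate the fourth-moment assumption into a Lyapunov-type smallness condition on the summands, and then to apply the Berry--Esseen theorem exactly as in Theorem \ref{T5}. First I would use the $n$-divisibility of $\mu_n$ to write $\mu_n=\nu_n^{*n}$ for some probability measure $\nu_n$; equivalently, $X_n$ has the same law as a sum $Y_{n,1}+\cdots+Y_{n,n}$ of $n$ independent copies of a random variable with law $\nu_n$. Because the summands are i.i.d.\ and $X_n$ has mean $0$ and variance $1$, each $Y_{n,j}$ has mean $0$ and variance $m_2=1/n$; a standard symmetrization argument shows that the finiteness of $E[X_n^4]$ forces $m_4:=E[Y_{n,1}^4]<\infty$, so the moment computations below are legitimate.

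The crux is the expansion of the fourth moment of the i.i.d.\ sum. By independence, every mixed term containing a first power of some $Y_{n,j}$ vanishes (since $E[Y_{n,j}]=0$), leaving only the terms of type $Y_{n,j}^4$ and $Y_{n,j}^2Y_{n,k}^2$ with $j\neq k$, whence
$$E[X_n^4]=n\,m_4+3n(n-1)m_2^2=n\,m_4+3-\tfrac{3}{n}.$$
Consequently $n\,m_4=E[X_n^4]-3+\tfrac{3}{n}$, and the hypothesis $E[X_n^4]\to 3$ gives $n\,m_4\to 0$. This is precisely the Lyapunov condition $\sum_{j=1}^n E[|Y_{n,j}|^4]\to 0$ for the array, so the classical Lyapunov central limit theorem already yields $\mu_{X_n}\to\mathcal{N}(0,1)$.

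To reach the conclusion by the same quantitative route used for Theorem \ref{T5}, I would instead feed the i.i.d.\ decomposition into the Berry--Esseen theorem. Since $X_n$ is already standardized ($\sigma^2=1/n$ with $n$ summands), Berry--Esseen gives $d_{kol}(\mu_{X_n},\mathcal{N}(0,1))\leq C\,n\,E[|Y_{n,1}|^3]$, and the Cauchy--Schwarz estimate $E[|Y_{n,1}|^3]\leq\sqrt{m_2m_4}=\sqrt{m_4/n}$ then yields
$$d_{kol}(\mu_{X_n},\mathcal{N}(0,1))\leq C\sqrt{n\,m_4}=C\sqrt{E[X_n^4]-3+\tfrac{3}{n}}\longrightarrow 0.$$
The main obstacle I anticipate is essentially organizational: carrying out the fourth-moment bookkeeping cleanly and justifying that the summands inherit fourth-moment integrability from the sum. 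Once the identity $E[X_n^4]=n\,m_4+3-\tfrac{3}{n}$ is in hand, the convergence is an immediate consequence of Berry--Esseen.
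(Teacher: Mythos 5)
Your proof is correct and follows essentially the same route as the paper: both convert $n$-divisibility into an i.i.d.\ decomposition $X_n \stackrel{law}{=} Y_{n,1}+\cdots+Y_{n,n}$, apply the Berry--Esseen theorem, control the third absolute moment via Cauchy--Schwarz, and arrive at the same bound $d_{Kol}(\mu_n,\Phi)\leq C\sqrt{E[X_n^4]-3+\tfrac{3}{n}}$. The only cosmetic differences are that you obtain the fourth-moment identity by direct expansion where the paper uses additivity of cumulants, and that you add an optional Lyapunov CLT shortcut; neither changes the substance of the argument.
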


\begin{thm}\label{T8} Let $\{\mu_n=\mu_{X_n};n\geq1\}$ be a sequence of probability measures with variance $1$ and mean $0$ such that $\mu_n$ is $n$-divisible with respect to free convolution. If $ E[X_n^4] \rightarrow 2$
then $\mu_{X_n}\rightarrow \mathcal{S}(0,1)$.
\end{thm}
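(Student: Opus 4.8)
The plan is to mirror the proof of Theorem \ref{T6}, replacing the free Lévy--Khintchine representation of an element of $ID(\boxplus)$ by the defining decomposition of an $n$-divisible law. Since $\mu_n$ is $n$-divisible with respect to $\boxplus$, I would write $\mu_n = \nu_n^{\boxplus n}$ for some probability measure $\nu_n$; equivalently, $X_n$ has the same distribution as a free sum $Z_1+\cdots+Z_n$ of $n$ freely independent copies of a variable $Z$ with law $\nu_n$. Because means, variances, and more generally free cumulants are additive under $\boxplus$, the constraints $E[X_n]=0$ and $E[X_n^2]=1$ force $Z$ to have mean $0$ and variance $\sigma^2=1/n$. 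The argument then reduces to applying the free Berry--Esseen theorem of Section 2.3 to this free sum and controlling the resulting moment factor by $m_4-2$.

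First I would record the cumulant bookkeeping. Writing $\kappa_j$ for free cumulants and using the free moment--cumulant formula for a centered law, one has $E[X_n^4]=\kappa_4(X_n)+2\kappa_2(X_n)^2=\kappa_4(X_n)+2$, so that $\kappa_4(X_n)=m_4-2$. Additivity under $\boxplus$ gives $\kappa_4(X_n)=n\,\kappa_4(Z)$ and $\kappa_2(X_n)=n\,\kappa_2(Z)$, whence $\kappa_4(Z)=(m_4-2)/n$ and $\kappa_2(Z)=1/n$. Applying the moment--cumulant formula once more to $Z$ yields
\[
 E[Z^4]=\kappa_4(Z)+2\kappa_2(Z)^2=\frac{m_4-2}{n}+\frac{2}{n^2}.
\]
(One should first note that finiteness of the fourth moment of $\mu_n$ forces $\nu_n$ to have a finite fourth moment, so that these cumulants are well defined; this follows from inverting the moment--cumulant relations up to order four.)

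The key step is to feed this into the free Berry--Esseen theorem, which for a sum of $n$ freely independent identically distributed centered variables of variance $\sigma^2=1/n$ gives a bound of the form
\[
 d_{kol}(\mu_n,\mathcal{S}(0,1))\le C\,\frac{E|Z|^3}{\sigma^3\sqrt{n}}=C\,n\,E|Z|^3 .
\]
Since the hypothesis controls only the fourth moment whereas the Berry--Esseen factor is the third absolute moment, I would bridge the gap by Cauchy--Schwarz,
\[
 E|Z|^3\le\sqrt{E[Z^2]\,E[Z^4]}=\sqrt{\tfrac1n\Big(\tfrac{m_4-2}{n}+\tfrac{2}{n^2}\Big)} ,
\]
so that $n\,E|Z|^3\le\sqrt{(m_4-2)+2/n}$. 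Combining the last two displays gives $d_{kol}(\mu_n,\mathcal{S}(0,1))\le C\sqrt{m_4-2+2/n}$, and since $m_4=E[X_n^4]\to 2$ while $2/n\to 0$, the right-hand side tends to $0$, which proves $\mu_{X_n}\to\mathcal{S}(0,1)$.

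I expect the main obstacle to be precisely this mismatch between the fourth-moment hypothesis and the third-moment form of the free Berry--Esseen estimate: the Cauchy--Schwarz bound is useful only because the variance $\sigma^2=1/n$ is small, and one must track the powers of $n$ carefully to see that the spurious factor $n$ in front of $E|Z|^3$ is exactly absorbed, leaving $\sqrt{m_4-2+2/n}$. The reason the index $n$ in ``$n$-divisible'' must coincide with the sequence index is visible here as well, since both the number of free factors and the closeness $m_4\to 2$ must improve simultaneously. A secondary point to verify is positivity, namely that $E[Z^4]\ge 0$ (equivalently $m_4\ge 2-2/n$) keeps the square root real for every $n$ in the sequence.
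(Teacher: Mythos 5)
Your overall strategy is the same as the paper's: write $\mu_n=\nu_n^{\boxplus n}$, realize $X_n$ as a sum of $n$ freely i.i.d.\ copies of $Z$ with law $\nu_n$, apply the free Berry--Esseen theorem to the normalized sum, and convert the moment factor into $m_4-2$ using cumulant additivity and Cauchy--Schwarz; your cumulant bookkeeping ($k_4(X_n)=m_4-2$, $k_2(Z)=1/n$, $k_4(Z)=(m_4-2)/n$, $\E[Z^4]=(m_4-2)/n+2/n^2$) is correct. The genuine gap is the form of the key tool you invoke. The inequality
\begin{equation*}
d_{Kol}\bigl(\mu_n,\mathcal{S}(0,1)\bigr)\le C\,\frac{\E|Z|^3}{\sigma^{3}\sqrt{n}}
\end{equation*}
is the \emph{classical} Berry--Esseen bound, not the free one. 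The free Berry--Esseen theorem of Chistyakov and G\"otze, stated as \eqref{berry-esseen-free} in Section 2.3, applies to freely i.i.d.\ summands normalized to variance one and reads
\begin{equation*}
d_{Kol}(\mathcal{S},F_{Y_n})\le \frac{K}{\sqrt{n}}\bigl(|m_3|+\sqrt{m_4}\bigr),
\end{equation*}
where $m_3,m_4$ are moments of the variance-one summand; no free estimate involving only the third absolute moment (and a general $\sigma$) is available in this paper, and it is not what Chistyakov--G\"otze proved. So the display on which your entire bound rests is unjustified. Note that the ``mismatch'' you worry about at the end is actually inverted: the free estimate already carries the $\sqrt{m_4}$ term, so the fourth-moment hypothesis enters directly rather than having to be forced through a third-moment bound.

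The repair is short and turns your computation into the paper's proof of Theorem \ref{t:free_kolmogorov}. Put $W_k:=\sqrt{n}\,Z_k$, so that $X_n\stackrel{law}{=}\frac{1}{\sqrt{n}}\sum_{k=1}^{n}W_k$ with $\E[W_1]=0$, $\E[W_1^2]=1$. Then \eqref{berry-esseen-free} gives $d_{Kol}(\mu_n,\mathcal{S}(0,1))\le \frac{K}{\sqrt{n}}\bigl(|m_3(W_1)|+\sqrt{m_4(W_1)}\bigr)$; Cauchy--Schwarz gives $|m_3(W_1)|\le\sqrt{\E[W_1^2]\,\E[W_1^4]}=\sqrt{m_4(W_1)}$; and your cumulant identities give
\begin{equation*}
m_4(W_1)=k_4(W_1)+2k_2(W_1)^2=n\,k_4(X_n)+2=n(m_4-2)+2,
\end{equation*}
whence
\begin{equation*}
d_{Kol}\bigl(\mu_n,\mathcal{S}(0,1)\bigr)\le \frac{2K}{\sqrt{n}}\sqrt{n(m_4-2)+2}=2K\sqrt{m_4-2+\tfrac{2}{n}}\longrightarrow 0,
\end{equation*}
which is exactly the bound of Theorem \ref{t:free_kolmogorov} and proves the statement. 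Your secondary positivity concern is automatic, since $m_4-2+\frac{2}{n}=n\,\E[Z^4]\ge 0$; the equality case is characterized in the paper's Appendix via the free kurtosis bound for $n$-divisible laws.
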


Finally, we want to point out that explicit bounds (for the total variation)  in the Gaussian approximations of random variables in a fixed Wiener chaos were given by Nourdin and Peccati in \cite{NoPe1,NoPe2}.

The paper is organized as follows. In Section 2 we give some preliminaries. In Section 3 we prove Theorems \ref{T5}-\ref{T8}.
In Section 4, we give few examples of application of the main results of this paper. Finally, we include an appendix where we show inequalities on the fourth moment for $N$-divisible measures which explain somehow the role of the Gaussian and Semicircle distribution as extremal points among the class of infinitely divisible measures.

\section{Preliminaries}

In this section we give some basic preliminaries on cumulants and free cumulants.  The reader familiar with these objects may skip this parts. We also present the Berry-Esseen theorem and its free version.
\subsection{Cumulants}

Let $\mathcal{M}$ denote the set of Borel probability measures on $\real$. We say that a measure $\mu $ has \textit{all moments} if $m_{k}(\mu )=\int_{\mathbb{R}}t^{k}\mu (\mathrm{d}t)<\infty ,$ for each
integer $k\geq 1$. 

Recall that the classical convolution of two probability measures $\mu_1,\mu_2$ on $\mathbb{R}$ is defined as the probability measure $\mu_1*\mu_2$ on $\mathbb{R}$ such that
$\mathcal{C}_{\mu_1*\mu_2}(t)=\mathcal{C}_{\mu_1}(t)+\mathcal{C}_{\mu_2}(t),t \in \real,$
where $\mathcal{C}_{\mu}(t) = \log \hat{\mu}(t),$ with $\hat{\mu}(t)$ the characteristic function of $\mu$.

Let $\mu \in \mathcal{M}$ be a probability measure with all its moments.  The coefficients $c_n = c_n (\mu)$ in the series expansion $$\mathcal{C}_{\mu}(t) =\sum^\infty_{n=1}\frac{c_n}{n!} t^n$$
are called (classical) cumulants or semi-invariants.

Since the cumulant transform $\mathcal{C}_{\mu }$ linearizes classical convolution, then the cumulants are also additive with respect to the convolution $\mu _{1}* \mu _{2}$ 
\begin{equation}
c_{n}(\mu _{1}* \mu _{2})=c_{n}(\mu _{1})+c_{n} (\mu_{2})
\end{equation}
and 
\begin{equation}\label{aux2}
c_{n}(\mu ^{*t})=tc_{n}(\mu ). 
\end{equation}

The relation between the cumulants and the moments is given in terms of the set  $P(n)$ of partitions of $\left\{ 1,\dots,n\right\}$, be the so-called moment-cumulant formula, 
\begin{equation}\label{moment-cumulant formula}
m_{n}(\mu )=\sum\limits_{\mathbf{\pi }\in P(n)}c_{\pi }(\mu ), 
\end{equation}
where $\pi \rightarrow c_{\pi }$ is the multiplicative extension of the cumulants to
partitions, that is
\begin{equation*}
c_{\pi }:=c_{| V_{1}| }\cdots c_{| V_{r}| }\qquad
\text{for}\qquad \pi =\{V_{1},...,V_{r}\}\in NC(n) \text{.}
\end{equation*}

The first moments are written in terms of cumulants as follows:
\begin{eqnarray}
m_1&=&c_1\nonumber\\
m_2&=&c_2+c_1^2\label{cumulants}\\
m_3&=&c_3+3c_2c_1+c_1^3\nonumber\\
m_4&=&c_4+4c_3c_1+3c_2^2+6c_{2}c_{1}^2+c_{1}^{4}.\nonumber 
\end{eqnarray}

Let $X$ be a random variable with distribution $\mu$.  We say that $X$ has absolute moments up to  order $n$ if $m_n(|X|)=E(|X|^n)=\int |x|^n \mu(dx) <\infty$. For a random variable $X$ with absolute moments up to order $n$ we can still define the cumulant of order $n$ by the
moment-cumulant formula 
\begin{equation}
m_n(X)=E(X^n)=\sum_{\pi\in P(n)}c_{\pi}(X).
\end{equation}

It will be important for us that cumulants are linear with respect to addition of random variables. That is, for independent random variables $X,Y$,  and $\lambda\in\mathbb{C}$,

\begin{equation}\label{aux1}c_n(X+\lambda Y)=c_n(X)+\lambda c_n(Y).
\end{equation}

\subsection{Non-Commutative Probability Spaces}

A $C^*$\textit{-probability space} is a pair $(\mathcal{A},\tau)$, where $\mathcal{A}$ is a unital $C^*$-algebra and $\tau:\mathcal{A}\to\mathbb{C}$ is a positive unital linear functional. The elements of $\mathcal{A}$ are called (non-commutative) random variables. An element $a\in\mathcal{A}$ such that $a=a^*$ is called self-adjoint.

The functional $\tau$ should be understood as the expectation in classical probability. For $a_1,\dots,a_k\in \mathcal{A}$, we will refer to the values of $\tau(a_{i_1}\cdots a_{i_n})$, $1\leq i_1,...,i_{n}\leq k$, $n\geq1$, as the \textit{mixed moments} of $a_1,\dots,a_k$.

For any self-adjoint element $a\in\mathcal{A}$ there exists a unique probability measure with compact support $\mu_a$ (its distribution) with the same moments as $a$, that is, $$\int_{\mathbb{R}}x^{k}\mu_a (dx)=\tau (a^{k}), \quad \forall k\in \mathbb{N}.$$

Even if we know the individual distribution of two self-adjoint elements $a,b\in \mathcal{A}$, their joint distribution (mixed moments) can be quite arbitrary, unless some notion of independence is assumed to hold between $a$ and $b$.  Here, we will work with free independence.

\begin{defi} Let $(A_n)_{n\geq 1}$ be a sequence of subalgebras of $\mathcal{A}$ and, for $a\in \mathcal{A}$, denote by $\bar{a}:=a-\tau(a)$. 
We say that $(A_n)_{n\geq 1}$ are \textit{freely independent} or \text{free} if
\begin{equation}
\tau(\bar{a}_1\bar{a}_2 \cdots \bar{a}_k)=0,
\end{equation}
whenever $k\geq 1$, $a_1,\dots a_k\in \mathcal{A}$, are such that  $a_i\in A_{j(i)}$, $1\leq i\leq k$ and $j(i)\neq j(i+1)$.
\end{defi}

\subsubsection{Free convolution}
Free convolution was defined in \cite{Voi85} for probability measures with compact support and later extended in \cite{Maa} for the case of finite variance, and in \cite{BeVo} for the general unbounded case.

The upper half-plane and the lower half-plane are respectively denoted as $\mathbb{C}^+$ and $\mathbb{C}^-$. 
Let $G_\mu(z) = \int_{\real}\frac{\mu(dx)}{z-x}$ $(z \in \mathbb{C}^+)$ be the Cauchy transform of $\mu \in \mathcal{M}$ and 
$F_\mu(z)$ its reciprocal $\frac{1}{G_\mu(z)}$. 

It was proved in Bercovici and Voiculescu \cite{BeVo} that there are positive numbers $\eta$
and $M$ such that $F_\mu$ has a right inverse $F_\mu^{-1}$ defined on the region
$\Gamma_{\eta,M}:= \{z\in\mathbb{C}^+; |Re(z)| < \eta Im(z),|z|>M\}.$

The Voiculescu transform of $\mu$ is defined by $\phi _{\mu }\left( z\right) =F_{\mu }^{-1}(z)-z,$
on any region of the form $\Gamma_{\eta,M}$ where $F^{-1}_{\mu}$ is defined; see \cite{BeVo}. The free cumulant
transform or $R$-transform is a variant of $\phi_\mu$  defined as
$\mathcal{C}_{\mu}^{\boxplus}(z)=R_{\mu }\left( z\right) =z\phi _{\mu }(\frac{1}{z})$
for $z$ in a domain $D_\mu\subset\mathbb{C}^-$ such that $1/z\in\Gamma_{\eta,M}$ where $F_{\mu}^{-1}$ is defined.

The \emph{free additive convolution} of two probability measures $\mu_1,\mu_2$ on $\mathbb{R}$ is the
probability measure $\mu_1\boxplus\mu_2$ on $\mathbb{R}$ such that 
$$\phi_{\mu_1\boxplus\mu_2}(z) = \phi_{\mu_1}(z) + \phi_{\mu_2}(z), \quad \text{for } z\in\Gamma_{\eta_1,M_1}\cap \Gamma_{\eta_2,M_2}$$
or,
equivalently, 
$$\mathcal{C}_{\mu_1\boxplus\mu_2}^{\boxplus}(z) = \mathcal{C}_{\mu_1}^{\boxplus}(z) + \mathcal{C}_{\mu_2}^{\boxplus}(z), \quad \text{for } z\in D_{\mu_1}\cap D_{\mu_1}.$$

Free additive convolution corresponds to the sum of free random variables: $\mu_a\boxplus\mu_b=\mu_{a+b}$, for $a$ and $b$ free random variables.

\subsection{Free cumulants}

Free cumulants were introduced by Speicher \cite{Sp94} in his combinatorial approach to Free Probability.
Let $\mu \in \mathcal{M}$ be a probability measure with all its moments. The free cumulants are the coefficients $k_{n}=k_{n}(\mu )$ in the series expansion
\begin{equation*}
\mathcal{C}_{\mu }^{\boxplus }\mathcal{(}z)=\sum\nolimits_{n=1}^{\infty }k_{n}(\mu )z^{n}.
\end{equation*}

Since, the cumulant transform $\mathcal{C}_{\mu }^{\boxplus }$ linearizes additive free convolution, then the free cumulants
also additive with respect to the free convolution $\mu _{1}\boxplus \mu _{2}$ 
\begin{equation*}
k_{n}(\mu _{1}\boxplus \mu _{2})=k_{n}\mathcal{(}\mu _{1})+k_{n} \mathcal{(}\mu
_{2})
\end{equation*}
and 
\begin{equation*}
k_{n}(\mu ^{\boxplus t})=tk_{n}(\mu ). 
\end{equation*}

The main object to describe the relation between the free cumulants and the moments is the set of non-crossing partitions of $\left\{ 1,\dots
,n\right\} $, denoted by $NC\left( n\right)$.   We will identify partitions $\pi$ on the set $\left\{ 1,\dots
,n\right\}$ with equivalence relations $\sim_{\pi}$ such that $a\sim_{\pi}b$ iff $s,b\in A$, for $A\in\pi$.  We say that a partition $\pi$ is \textbf{non-crossing } if $a\sim _{\pi }c$ , $b\sim
_{\pi }d\Rightarrow a\sim _{\pi }b\sim _{\pi }c$ $\sim _{\pi }d$, \ for all $
1\leq a<b<c<d\leq n.$ 

The so-called moment-cumulant formula of Speicher \cite{Sp94} gives a relation between  moments and free  cumulants.
\begin{equation}\label{free moment-cumulant formula}
m_{n}(\mu )=\sum\limits_{\mathbf{\pi }\in NC(n)}k_{\pi }(\mu ), 
\end{equation}
where $\pi \rightarrow k_{\pi }$ is the multiplicative extension of the free cumulants to
non-crossing partitions, that is
\begin{equation*}
k_{\pi }:=k_{| V_{1}| }\cdots k_{| V_{r}| }\qquad
\text{for}\qquad \pi =\{V_{1},...,V_{r}\}\in NC(n) \text{.}
\end{equation*}

The first moments are written in terms of cumulants as follows:
\begin{eqnarray}\label{freemomentcumulants}
m_{1} &=&k_{1} \nonumber\\
m_{2} &=&k_{2}+k_{1}^{2}\\
m_{3} &=&k_{3}+3k_{2}k_{1}+k_{1}^{3}\nonumber \\
m_{4} &=&k_{4}+4k_{3}k_{1}+2k_{2}^{2}+6k_{2}k_{1}^{2}+k_{1}^{4}\nonumber \end{eqnarray}

Similarly as for the classical case, free cumulants are linear with respect to addition of free random variables. That is, for free random variables $X,Y$,  and $\lambda\in\mathbb{C}$,

\begin{equation}k_n(X+\lambda Y)=k_n(X)+\lambda k_n(Y).
\end{equation}

\subsection{Classical and free Berry-Esseen Theorem}

 Let $Y,Z$ two random variables with values in $\R$ with distribution functions $F_{Y},F_{Z}$. The Kolmogorov distance between $F_Y:=\Pb[Y\leq x]$ and $F_Z:=\Pb[Z\leq x]$ is defined by
$$d_{Kol}(F_Y,F_Z)=\sup_{x\in \R}\left|F_{Y}(x)-F_{Z}(x)\right|.$$

Denote by $\Phi(x)$ the distribution function of the standard Gaussian random variable. Let $\{X_{n}; n\geq1\}$ be a sequence of independent and indentically distributed random variables with $m_{1}(X_{1})=0,m_{2}(X_{1})=\sigma^2$, and $\E[|X_{1}|^{3}]<\infty$. Define 
$$Y_{n}:=\frac{1}{\sigma\sqrt{n}}\sum_{k=1}^{n}X_{k}.$$
The Berry-Esseen Theorem provides the following bound for the error of the gaussian approximation to the distribution function $F_{n}(x):=\Pb[Y_{n}\leq x]$;
\begin{eqnarray}\label{berry-esseen}
d_{Kol}(\Phi,F_{n}) 
  &\leq& \frac{C}{\sigma^{3}\sqrt{n}}\E[|X_{1}|^{3}],  
\end{eqnarray}
where $C$ is an absolut constant smaller than 0.4748.\\

The analogue of the Berry-Esseen theorem for free random variables was proved by Chistyakov and G\"otze \cite{ChyGo} and is as follows. Let $\{X_{n};n\geq1\}$ be a sequence of \emph{freely} independent and indentically distributed random variables with mean zero and variance 1, and let $Y_{n}:=\frac{1}{\sqrt{n}}\sum_{k=1}^{n}X_{k}$. Denote by $\mathcal{S}$ the distribution function of a standard Semicirle random variable. If the fourth moment of $X_{n}$ exists, then we have the following estimate for the error of the semi-circular approximation to $Y_{n}$;
\begin{eqnarray}\label{berry-esseen-free}
d_{Kol}(\mathcal{S},F_{Y_n}) 
  &\leq& \frac{K}{\sqrt{n}}(|m_{3}| + \sqrt{m_{4}}),  
\end{eqnarray}
where $K > 0$ is an absolut constant, and $m_3,m_4$ denote the third and fourth moment of $X_{1}$.

\section{Main Theorems} 

In this section we prove Theorems \ref{T5} and \ref{T6}. In fact,  we will prove stronger versions which include $N$-divisible measures from where we can deduce also Theorems  \ref{T7} and \ref{T8}.

\subsection{Classical convolution}

\begin{defi} Let  $\mu$ be a probability measure on $\mathbb{R}$.
\begin{enumerate} \item The measure $\mu$ is called $n$-divisible with respect to classical convolution $*$  if it  is the $n$-fold convolution of another probability measure. That is $\mu=\mu_n^{*n}$, for some $\mu_n$.  
\item The measure $\mu$ is called \emph{infinitely divisible} with respect to classical convolution $*$ if it is $n$-divisible for all $n\in\mathbb{N}$.
\end{enumerate}
\end{defi}

During this section, divisibility will be understood in the classical sense, that is, with respect to the classical convolution.

\begin{thm}\label{t:classical_kolmogorov}
Let $\mu$ be a classical $N$-divisible probability measure with mean 0 and variance 1. Assume that $m_{4}(\mu):=\int x^{4}\mu(dx)<\infty$. Then we have the following estimate for the error of the Gaussian approximation to $\mu$
\begin{align}\label{eq:mainc}
d_{Kol}(\mu,\Phi)
  &\leq  C\sqrt{m_{4}(\mu)-3+\frac{3}{N}},
\end{align}
where $\Phi(x):=\int_{-\infty}^{x}\frac{1}{\sqrt{2\pi}}e^{-\frac{s^{2}}{2}}\text{d}s$ and $C$ is the universal constant from the Berry-Esseen Theorem.
\end{thm}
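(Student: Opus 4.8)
The plan is to realize $\mu$ as the law of a normalized sum of i.i.d.\ summands and feed this directly into the classical Berry--Esseen bound \eqref{berry-esseen}. Since $\mu$ is $N$-divisible, I would write $\mu=\mu_N^{*N}$ and let $X_1,\dots,X_N$ be i.i.d.\ with common law $\mu_N$. By the scaling property of cumulants \eqref{aux2} together with the normalization $c_1(\mu)=0$, $c_2(\mu)=1$, each summand satisfies $c_1(\mu_N)=0$ and $c_2(\mu_N)=1/N$; in particular $X_1$ has mean zero and variance $\sigma^2=1/N$. With this value of $\sigma$ the normalizing factor in the Berry--Esseen statement becomes $\sigma\sqrt N=1$, so that $Y_N=\frac{1}{\sigma\sqrt N}\sum_{k=1}^N X_k=\sum_{k=1}^N X_k$ has law exactly $\mu_N^{*N}=\mu$. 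Hence \eqref{berry-esseen} applies and gives
\begin{equation*}
d_{Kol}(\mu,\Phi)\leq \frac{C}{\sigma^3\sqrt N}\,\E[|X_1|^3]=C\,N\,\E[|X_1|^3].
\end{equation*}

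The next step handles the fact that Berry--Esseen involves a third absolute moment while the target bound involves the fourth moment. I would pass from one to the other by Cauchy--Schwarz: writing $|X_1|^3=|X_1|\cdot X_1^2$,
\begin{equation*}
\E[|X_1|^3]\leq \sqrt{\E[X_1^2]\,\E[X_1^4]}=\sqrt{\tfrac{1}{N}\,m_4(\mu_N)},
\end{equation*}
where I used $\E[X_1^2]=1/N$ and $\E[X_1^4]=m_4(\mu_N)$. Combining with the previous display yields $d_{Kol}(\mu,\Phi)\leq C\sqrt{N\,m_4(\mu_N)}$, so everything reduces to computing $N\,m_4(\mu_N)$ in terms of $m_4(\mu)$.

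This last computation is purely the moment--cumulant dictionary \eqref{cumulants}. For a centered measure the fourth relation reads $m_4=c_4+3c_2^2$. Applying it to $\mu_N$ (with $c_2(\mu_N)=1/N$) and to $\mu$ (with $c_2(\mu)=1$) gives $m_4(\mu_N)=c_4(\mu_N)+3/N^2$ and $m_4(\mu)=c_4(\mu)+3$. Since cumulants are additive, \eqref{aux2} gives $c_4(\mu)=N\,c_4(\mu_N)$, and therefore
\begin{equation*}
N\,m_4(\mu_N)=N\,c_4(\mu_N)+\tfrac{3}{N}=c_4(\mu)+\tfrac{3}{N}=m_4(\mu)-3+\tfrac{3}{N}.
\end{equation*}
Substituting this into the bound above produces exactly \eqref{eq:mainc}. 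As a sanity check, this identity also shows $m_4(\mu)-3+3/N=N\,m_4(\mu_N)\geq 0$, so the quantity under the square root is automatically nonnegative.

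I expect the only genuinely delicate point to be the bookkeeping of the scaling: one must choose $\sigma^2=1/N$ so that the i.i.d.\ decomposition of the $N$-divisible law matches the $1/(\sigma\sqrt N)$ normalization in \eqref{berry-esseen}, after which the powers of $N$ coming from $\sigma^{-3}(\sqrt N)^{-1}$, from Cauchy--Schwarz, and from cumulant additivity must conspire to leave the clean expression $m_4(\mu)-3+3/N$ under the root. Everything else is a direct application of the results recorded in Section~2.
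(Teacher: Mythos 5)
Your proposal is correct and follows essentially the same route as the paper's own proof: decompose $\mu=\mu_N^{*N}$ into i.i.d.\ summands, apply the Berry--Esseen bound, pass from the third absolute moment to the fourth via Cauchy--Schwarz, and use cumulant additivity plus the moment--cumulant formula to arrive at $m_4(\mu)-3+3/N$; the only (cosmetic) difference is that you work with summands of variance $\sigma^2=1/N$ while the paper rescales them to $\sqrt{N}X_k$ of variance one before invoking Berry--Esseen. Your closing observation that the identity $N\,m_4(\mu_N)=m_4(\mu)-3+\tfrac{3}{N}$ makes the quantity under the square root automatically nonnegative is a nice bonus, since the paper instead defers that positivity claim to its appendix on kurtosis.
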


\begin{rem}In Theorem \ref{t:classical_kolmogorov} we implicitly assume that $m_4(\mu)-3+3/N$ is positive when $\mu$ is $N$-divisible. In Section \ref{kurt} we will show that this is the case and furthermore characterize when the mininum $m_4-3$ is achieved. 

\end{rem}

If we apply the previous theorem to a sequence of $N_n$-divisible measures, we deduce the following corollary which implies Theorem \ref{T7} when $N_n=n$.

\begin{cor}\label{maincorclass}
Let $\{\mu_{n};n\geq1\}$ be a sequence of $N_{n}$-divisible probability measures with variance 1 and mean 0. If $N_{n}\rightarrow\infty$ and $\int s^{4}\mu_{n}(ds)\rightarrow 3$ as $n\rightarrow\infty$, then $\mu_{n}$ converges in distribution to a standard gaussian measure. 

\end{cor}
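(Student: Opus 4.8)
The plan is to invoke Theorem \ref{t:classical_kolmogorov} for each element of the sequence and then pass to the limit. Since each $\mu_n$ is $N_n$-divisible with mean $0$ and variance $1$, and has finite fourth moment (implicit in the hypothesis $\int s^4\mu_n(ds)\to 3$), Theorem \ref{t:classical_kolmogorov} applies with $N=N_n$ and yields
$$d_{Kol}(\mu_n,\Phi)\leq C\sqrt{m_4(\mu_n)-3+\tfrac{3}{N_n}}.$$

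Next I would analyze the right-hand side. By hypothesis $m_4(\mu_n)=\int s^4\mu_n(ds)\to 3$ and $N_n\to\infty$, so $3/N_n\to 0$; hence the quantity under the square root tends to $0$. It stays nonnegative throughout, as guaranteed by the remark following Theorem \ref{t:classical_kolmogorov}, so the bound is well defined for every $n$. Consequently $d_{Kol}(\mu_n,\Phi)\to 0$.

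Finally I would translate this into convergence in distribution. Convergence of the Kolmogorov distance to zero means $\sup_{x\in\R}|F_{\mu_n}(x)-\Phi(x)|\to 0$; in particular $F_{\mu_n}(x)\to\Phi(x)$ for every $x\in\R$. Since $\Phi$ is continuous on all of $\R$, pointwise convergence of the distribution functions $F_{\mu_n}$ to $\Phi$ is exactly weak convergence, so $\mu_n$ converges in distribution to the standard Gaussian measure.

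There is essentially no obstacle here: the corollary is a direct limiting consequence of the quantitative bound already established in Theorem \ref{t:classical_kolmogorov}. The only point meriting a word of care is the passage from Kolmogorov-distance convergence to convergence in distribution, which relies on the continuity of $\Phi$ (ensuring that uniform—hence pointwise—convergence of the $F_{\mu_n}$ to $\Phi$ coincides with weak convergence); no continuity-point caveats are needed precisely because the limiting distribution function is everywhere continuous.
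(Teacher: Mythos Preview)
Your proposal is correct and follows exactly the approach indicated in the paper, which simply observes that the corollary is obtained by applying Theorem~\ref{t:classical_kolmogorov} to each $\mu_n$ and letting $n\to\infty$. Your added remark on why $d_{Kol}(\mu_n,\Phi)\to 0$ implies weak convergence (via continuity of $\Phi$) is a harmless elaboration of a standard fact.
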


Moreover, letting the limit as $N\to\infty$ in Theorem \ref{t:classical_kolmogorov} we get Theorem \ref{T5}.

\begin{cor}

Let $\mu$ be infinitely divisible satifying the conditions of Theorem \ref{t:classical_kolmogorov}, then 
\begin{eqnarray*}
d_{Kol}(\mu_{n},\Phi)
  &\leq&  C\sqrt{m_{4}(\mu)-3},
\end{eqnarray*}
where $m_{4}(\mu)$ denotes the fourth moment of $\mu$.

\end{cor}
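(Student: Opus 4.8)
The plan is to deduce this corollary directly from Theorem \ref{t:classical_kolmogorov} by a straightforward limiting argument in $N$; essentially all of the analytic content has already been established in that theorem, so what remains is a soft passage to the limit.

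First I would recall that a measure $\mu$ that is infinitely divisible with respect to classical convolution is, by definition, $N$-divisible for every $N\in\mathbb{N}$. Since $\mu$ moreover has mean $0$, variance $1$, and finite fourth moment $m_4(\mu)$, the hypotheses of Theorem \ref{t:classical_kolmogorov} are met for every choice of $N$. Applying that theorem for each $N$ therefore yields the family of inequalities
$$d_{Kol}(\mu,\Phi)\leq C\sqrt{m_4(\mu)-3+\tfrac{3}{N}},\qquad N\in\mathbb{N}.$$
The crucial observation is that the left-hand side is a fixed number independent of $N$, whereas the right-hand side is decreasing in $N$ and, by continuity of the square root, converges to $C\sqrt{m_4(\mu)-3}$ as $N\to\infty$. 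Passing to the limit gives $d_{Kol}(\mu,\Phi)\leq C\sqrt{m_4(\mu)-3}$, which is the asserted bound.

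I do not expect a genuine obstacle here, precisely because the corollary inherits its strength from the quantitative estimate for $N$-divisible measures. The one point deserving a word of care is that the limiting bound be well defined, i.e. that $m_4(\mu)-3\geq 0$. This is automatic from the argument itself: the family of inequalities forces $m_4(\mu)-3+\tfrac{3}{N}\geq 0$ for every $N$, whence $m_4(\mu)\geq 3$ in the limit, in agreement with the extremal role of the Gaussian among infinitely divisible laws discussed in Section \ref{kurt}.
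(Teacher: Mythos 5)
Your proof is correct and is exactly the paper's argument: the paper derives this corollary by noting that an infinitely divisible $\mu$ is $N$-divisible for every $N$ and letting $N\to\infty$ in the bound $d_{Kol}(\mu,\Phi)\leq C\sqrt{m_4(\mu)-3+3/N}$ of Theorem \ref{t:classical_kolmogorov}. Your closing remark on why $m_4(\mu)-3\geq 0$ matches the paper's treatment as well, which defers this positivity to the kurtosis inequalities of the appendix.
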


Finally, we obtain a characterization for the normal distribution among infinitely divisible measures. This characterization was already observed in \cite{APA}.

\begin{cor}

Let $\mu$ be infinitely divisible with finite fourth moment $m_{4}(\mu)$. If $m_{1}(\mu)=0, m_{2}(\mu)=1$ and $m_{4}(\mu)=3$  then $\mu=\mathcal{N}(0,1)$.

\end{cor}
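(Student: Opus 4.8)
The plan is to read the statement off directly from the preceding corollary, whose bound degenerates to zero precisely in the extremal case $m_{4}=3$. Since $\mu$ is infinitely divisible with $m_{1}(\mu)=0$, $m_{2}(\mu)=1$ and finite fourth moment, it satisfies the hypotheses of Theorem \ref{t:classical_kolmogorov}, and the infinitely divisible case of that theorem (the preceding corollary) yields
$$d_{Kol}(\mu,\Phi)\leq C\sqrt{m_{4}(\mu)-3}.$$
Substituting $m_{4}(\mu)=3$ gives $d_{Kol}(\mu,\Phi)\leq 0$, hence $d_{Kol}(\mu,\Phi)=0$. It then remains to argue that vanishing Kolmogorov distance forces $\mu=\mathcal{N}(0,1)$: since $\Phi$ is continuous (indeed strictly increasing) on all of $\R$, the equality $\sup_{x}|F_{\mu}(x)-\Phi(x)|=0$ means $F_{\mu}(x)=\Phi(x)$ for every $x\in\R$, and two probability measures with identical distribution functions coincide.

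I would also record the underlying structural reason, which gives a self-contained proof independent of the Berry-Esseen machinery and matches the characterization of \cite{APA}. Writing the L\'evy-Khintchine representation of an infinitely divisible $\mu$ with finite variance as
$$\log\hat{\mu}(t)=-\tfrac{1}{2}at^{2}+\int_{\R}\bigl(e^{itx}-1-itx\bigr)\,\nu(dx),$$
one computes from the cumulant generating function that $c_{1}=0$, $c_{2}=a+\int x^{2}\,\nu(dx)$, and $c_{n}=\int x^{n}\,\nu(dx)$ for all $n\geq 3$, where $a\geq 0$ and $\nu$ is the L\'evy measure. The normalizations $m_{1}=0$ and $m_{2}=1$ force $c_{1}=0$ and $c_{2}=1$, so the moment-cumulant formula \eqref{cumulants} collapses to $m_{4}=c_{4}+3c_{2}^{2}=c_{4}+3$. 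Hence
$$m_{4}(\mu)-3=c_{4}=\int_{\R}x^{4}\,\nu(dx)\geq 0,$$
which simultaneously justifies the implicit positivity used above and shows that $m_{4}=3$ is equivalent to $\int x^{4}\,\nu(dx)=0$, i.e. $\nu=0$. With $\nu=0$ the representation reduces to $\log\hat{\mu}(t)=-\tfrac{1}{2}t^{2}$, so $\mu=\mathcal{N}(0,1)$.

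There is no genuine obstacle in this corollary: its entire content has already been discharged, either through the quantitative bound of the preceding corollary or, in the alternative route, through the elementary identity $m_{4}-3=\int x^{4}\,\nu(dx)$. The only point requiring a word of care is the passage from $d_{Kol}=0$ to equality of measures, which rests solely on the continuity of $\Phi$; I would state this explicitly to dispatch the (here vacuous) subtlety that the Kolmogorov distance controls distribution functions only up to their common points of continuity in general.
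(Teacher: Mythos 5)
Your proposal is correct and its main line is exactly the paper's own route: the corollary is read off from the preceding one by substituting $m_{4}=3$ into $d_{Kol}(\mu,\Phi)\leq C\sqrt{m_{4}(\mu)-3}$, and a zero Kolmogorov distance means $F_{\mu}(x)=\Phi(x)$ for every $x\in\R$, hence $\mu=\mathcal{N}(0,1)$ (note this final step needs no continuity of $\Phi$ at all: a vanishing supremum over all of $\R$ already forces identity of the distribution functions, so the caveat you flag is, as you say, vacuous). Your supplementary L\'evy--Khintchine argument, giving $m_{4}(\mu)-3=c_{4}(\mu)=\int_{\R}x^{4}\,\nu(dx)\geq 0$ with equality iff $\nu=0$, is also correct and is a genuinely independent, self-contained route; it is essentially the characterization of \cite{APA} that the paper cites, and it has the added virtue of proving the implicit positivity $m_{4}\geq 3$ directly, which the paper instead delegates to the kurtosis inequality in its appendix.
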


We now prove the main result of the section.

\begin{proof}[Proof of Theorem \ref{t:classical_kolmogorov}]
In what follows, for every random variable $Z$, $c_{j}(Z)$ will denote the $j$-th cumulant of $Z$ and $m_{j}(Z)$ its $j$-th moment. Let $S$ be a random variable with law $\mu$. There exist i.i.d. random variables $X_{1},...,X_{n}$, such that $S\stackrel{law}{=}X_{1}+\cdots  +X_{N}$. Consequently, 
$$S=\frac{1}{\sqrt{N}}\sum_{k=1}^{N}\sqrt{N}X_{k}.$$
Thus, by the Berry Esseen theorem we obtain the estimate 
\begin{eqnarray}\label{cka1}
d_{Kol}(\mu,\Phi)
  &\leq&  \frac{C}{\sqrt{N}}\E[|\sqrt{N}X_{1}|^3].
\end{eqnarray}

Using the additivity of cumulants (relation \eqref{aux1}) we obtain 

\begin{eqnarray}\label{eq:aux3}
1=c_{2}(S)=  \sum_{k=1}^{N}c_{2}(X_{k})=Nc_{2}(X_{1})=c_{2}(\sqrt{N}X_{1}),
\end{eqnarray}
and 
\begin{eqnarray}\label{eq:aux4}
Nc_{4}(S)=  N\sum_{k=1}^{N}c_{4}(X_{k})=N^2c_{4}(X_{1})=c_{4}(\sqrt{N}X_{1}).
\end{eqnarray}
Thus, applying the H\"older inequality, as well as relations $\eqref{cumulants}$,
\begin{eqnarray}\label{cka2}
\E[|\sqrt{N}X_{1}|^3]^2
  &\leq&  \E\left[(\sqrt{N}X_{1})^2\right]\E\left[(\sqrt{N}X_{1})^4\right]\nonumber\\
  &=&     c_{2}(\sqrt{N}X_{1})\left(c_{4}(\sqrt{N}X_{1})+3c_{2}(\sqrt{N}X_{1})^2\right)\nonumber\\
	&=&     Nc_{4}(X_{1})+3.
\end{eqnarray}
Finally, substituting \eqref{eq:aux3},\eqref{eq:aux4},\eqref{cka2} in \eqref{cka1}, and using relation \eqref{cumulants} we get
\begin{eqnarray*}
d_{Kol}(\mu,\Phi)
  &\leq&  \frac{C}{\sqrt{N}}\sqrt{Nc_{4}(X_{1})+3}\\
	&=&     C\sqrt{m_{4}(X_{1})-3+\frac{3}{N}}
\end{eqnarray*}
\end{proof}

The same result can be proven in a similar way for free random variables.

\subsection{Free convolution}

During this section, divisibility will be understood in the free sense, and the sum of random variables will be understood as the sum of free random varables in a suitable non-commutative probability space.

\begin{defi} Let  $\mu$ be a probability measure on $\mathbb{R}$.
\begin{enumerate} \item The measure $\mu$ is said to be $n$-divisible with respect to free convolution $\boxplus$  if it  is the $n$-fold free convolution of another probability measure. That is $\mu=\mu_n^{\boxplus n}$, for some $\mu_n$.  

\item The measure $\mu$ is called \emph{infinitely divisible} with respect to the free convolution $\boxplus$  if it is $n$-divisible for all $n\in\mathbb{N}$.
\end{enumerate}
\end{defi}

\begin{thm}\label{t:free_kolmogorov}
Let $\mu$ be a free $N$-divisible probability measure with variance 1 and mean 0. Assume that $m_{4}(\mu):=\int x^{4}\mu(dx)<\infty$. Then we have the following estimate for the error of the semi-circular approximation to $\mu$
\begin{eqnarray*}
d_{Kol}(\mu, F_w)
  &\leq&  2K\sqrt{m_{4}-2+\frac{2}{N}},
\end{eqnarray*}
where $F_w(x):=\int_{-\infty}^{x}\frac{1}{{2\pi}}\mathbf{1}_{(-2,2)}\sqrt{4-s^2} \text{d}s$, and $K$ is the universal constant from the free Berry-Esseen Theorem.
\end{thm}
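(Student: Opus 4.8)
The plan is to mirror the classical argument from Theorem~\ref{t:classical_kolmogorov}, replacing classical cumulants with free cumulants and the Berry-Esseen bound \eqref{berry-esseen} with its free analogue \eqref{berry-esseen-free}. Since $\mu$ is free $N$-divisible, there exist freely independent, identically distributed self-adjoint random variables $X_1,\dots,X_N$ in some $C^*$-probability space such that $S\stackrel{law}{=}X_1+\cdots+X_N$ has law $\mu$. Writing $S=\frac{1}{\sqrt N}\sum_{k=1}^N \sqrt N X_k$, the $\sqrt N X_k$ are free, identically distributed, and I need to check their mean and variance before applying the free Berry-Esseen theorem to the normalized sum.

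First I would use the additivity and scaling of free cumulants. Since $\mu$ has mean $0$ and variance $1$, we have $k_1(S)=0$ and $k_2(S)=1$. From $k_j(S)=\sum_{k=1}^N k_j(X_k)=N k_j(X_1)$ and the scaling $k_j(\sqrt N X_1)=N^{j/2}k_j(X_1)$, I would deduce $k_1(\sqrt N X_1)=\sqrt N\,k_1(X_1)=\frac{1}{\sqrt N}k_1(S)=0$ and $k_2(\sqrt N X_1)=N k_2(X_1)=k_2(S)=1$. Thus each $\sqrt N X_1$ is centered with unit variance, exactly the hypothesis required by \eqref{berry-esseen-free}. Applying that estimate to $Y_N=\frac{1}{\sqrt N}\sum \sqrt N X_k$, whose law is $\mu$, gives
\begin{equation*}
d_{Kol}(\mu,F_w)\leq \frac{K}{\sqrt N}\bigl(|m_3(\sqrt N X_1)|+\sqrt{m_4(\sqrt N X_1)}\bigr).
\end{equation*}

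Next I would control the two moment terms using the free moment-cumulant relations \eqref{freemomentcumulants}. With $k_1(\sqrt N X_1)=0$ and $k_2(\sqrt N X_1)=1$, these collapse to $m_3(\sqrt N X_1)=k_3(\sqrt N X_1)$ and $m_4(\sqrt N X_1)=k_4(\sqrt N X_1)+2$. Using scaling, $k_4(\sqrt N X_1)=N^2 k_4(X_1)=N\cdot N k_4(X_1)=N k_4(S)$, and from \eqref{freemomentcumulants} applied to $S$ one has $k_4(S)=m_4(\mu)-2$, so $m_4(\sqrt N X_1)=N(m_4-2)+2$. For the third moment I expect the main technical obstacle: the $|m_3|=|k_3(\sqrt N X_1)|$ term must be absorbed into the fourth-moment bound. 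The natural route is an inequality of the form $k_3^2\leq c\,k_4$ (a Cauchy-Schwarz / positivity estimate valid for free cumulants of a compactly supported self-adjoint variable, analogous to the Hölder step in the classical proof), which would let me bound $|m_3(\sqrt N X_1)|\lesssim \sqrt{m_4(\sqrt N X_1)}$ and hence $|m_3|+\sqrt{m_4}\leq 2\sqrt{m_4(\sqrt N X_1)}=2\sqrt{N(m_4-2)+2}$.

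Finally, substituting back yields
\begin{equation*}
d_{Kol}(\mu,F_w)\leq \frac{K}{\sqrt N}\cdot 2\sqrt{N(m_4-2)+2}=2K\sqrt{m_4-2+\frac{2}{N}},
\end{equation*}
which is the claimed bound. The crux is therefore the free positivity estimate relating $k_3$ and $k_4$ of a centered unit-variance variable; I would either invoke such a bound directly (it should follow from applying a suitable moment inequality to $X_1$, paralleling the classical Hölder step $\E[|X|^3]^2\leq \E[X^2]\E[X^4]$) or derive it from the spectral positivity of $\tau$ on the $C^*$-algebra, after which the remaining computation is routine bookkeeping with the scaling and additivity of free cumulants.
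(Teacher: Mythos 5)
Your proposal follows the paper's proof essentially step by step: the same decomposition $S=\frac{1}{\sqrt N}\sum_{k=1}^N \sqrt N X_k$, the same application of the free Berry--Esseen bound \eqref{berry-esseen-free}, the same cumulant bookkeeping $m_4(\sqrt N X_1)=Nk_4(S)+2=N(m_4(\mu)-2)+2$, and the same final arithmetic, so the structure and the conclusion are right. The one place where you diverge is the third-moment term, and there the route you call ``natural'' is a false lemma: an inequality of the form $k_3^2\leq c\,k_4$ cannot hold for general centered unit-variance variables, since one can have $k_4=0$ with $k_3\neq 0$ (for instance, a compactly supported law with $m_1=0$, $m_2=1$, $m_3=1/2$, $m_4=2$ exists because the corresponding Hankel matrix is positive definite, and it has $k_4=m_4-2m_2^2=0$ while $k_3=m_3\neq0$). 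Fortunately this detour is unnecessary, and the fallback you mention parenthetically is exactly what the paper does: the distribution of the self-adjoint element $\sqrt N X_1$ is an honest probability measure on $\R$, so Cauchy--Schwarz applies verbatim to give $\E[|\sqrt N X_1|^3]^2\leq \E[(\sqrt N X_1)^2]\,\E[(\sqrt N X_1)^4]=\E[(\sqrt N X_1)^4]$, hence $|m_3(\sqrt N X_1)|+\sqrt{m_4(\sqrt N X_1)}\leq 2\sqrt{m_4(\sqrt N X_1)}$. In other words, the step you flag as ``the main technical obstacle'' is a one-line moment inequality, not a cumulant positivity estimate; once you phrase it that way, your argument coincides with the paper's proof of Theorem \ref{t:free_kolmogorov}.
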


\begin{rem}In Theorem  \ref{t:free_kolmogorov} we implicitly assume that $m_4-2+2/N$ is positive when $\mu$ is $N$-divisible. In Section \ref{kurt} we will show that this is the case and furthermore characterize when the mininum $m_4-3$ is achieved. 

\end{rem}

From Theorem \ref{t:free_kolmogorov} we can deduce the following result, analogous to Corollary \ref{maincorclass}  

\begin{cor}
Let $\{\mu_{n};n\geq1\}$ be a sequence of $N_{n}$-divisible probability measures with variance 1 and mean 0. If $N_{n}\rightarrow\infty$ and $\int s^{4}\mu_{n}(ds)\rightarrow 2$ as $n\rightarrow\infty$, then $\mu_{n}$ converges in distribution to a standard semicircle distribution. 

\end{cor}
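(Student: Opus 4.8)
The plan is to obtain the corollary as an immediate consequence of Theorem \ref{t:free_kolmogorov}, in exact parallel with the way Corollary \ref{maincorclass} is deduced from Theorem \ref{t:classical_kolmogorov}. First I would check that each $\mu_n$ meets the hypotheses of Theorem \ref{t:free_kolmogorov}: by assumption it is free $N_n$-divisible with mean $0$ and variance $1$, and its fourth moment $m_4(\mu_n)=\int s^4\,\mu_n(ds)$ is finite (the very hypothesis $m_4(\mu_n)\to 2$ presupposes that these integrals are finite, at least for all large $n$). Applying the theorem with $N=N_n$ then yields, for every $n$,
\begin{equation*}
d_{Kol}(\mu_n,F_w)\leq 2K\sqrt{\,m_4(\mu_n)-2+\tfrac{2}{N_n}\,}.
\end{equation*}

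Next I would pass to the limit on the right-hand side. Since $m_4(\mu_n)\to 2$ and $N_n\to\infty$, we have $2/N_n\to 0$, so the radicand $m_4(\mu_n)-2+2/N_n$ tends to $0$; by the remark following Theorem \ref{t:free_kolmogorov} (established in Section \ref{kurt}) this radicand is nonnegative for each $n$, so the square root is well defined and also tends to $0$. Hence $d_{Kol}(\mu_n,F_w)\to 0$ as $n\to\infty$.

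Finally, $F_w$ is the distribution function of the standard semicircle law $\mathcal{S}(0,1)$, which is continuous; convergence in Kolmogorov distance to a continuous limiting distribution function forces convergence in distribution. Thus $\mu_n\to\mathcal{S}(0,1)$ weakly, which is the assertion. I do not anticipate any real obstacle, since all the analytic content is already packaged in Theorem \ref{t:free_kolmogorov} (whose proof mirrors that of Theorem \ref{t:classical_kolmogorov}, invoking the free Berry-Esseen estimate \eqref{berry-esseen-free} and the free moment-cumulant relations \eqref{freemomentcumulants} in place of their classical analogues). The only point that warrants a word of care is the nonnegativity of the radicand, which is precisely what the cited remark guarantees.
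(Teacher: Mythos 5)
Your proof is correct and takes essentially the same route the paper intends: the corollary follows immediately from Theorem~\ref{t:free_kolmogorov} applied with $N=N_n$, since the bound $2K\sqrt{m_4(\mu_n)-2+2/N_n}$ tends to zero as $n\to\infty$, exactly mirroring how Corollary~\ref{maincorclass} is deduced from Theorem~\ref{t:classical_kolmogorov}. Your added remarks on the nonnegativity of the radicand and on Kolmogorov convergence implying weak convergence (via continuity of $F_w$) are sound and fill in the details the paper leaves implicit.
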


Again, letting taking the limit as $N\to\infty$ we get Theorem \ref{T5}.

\begin{cor}\label{cor4}
Let $\mu$ be infinitely divisible satifying the conditions of Theorem \ref{t:free_kolmogorov}, then 
\begin{eqnarray*}
d_{Kol}(\mu, F_w)
  &\leq&  K\sqrt{m_{4}(\mu)-2},
\end{eqnarray*}
where $m_{4}(\mu)$ denotes the fourth moment of $\mu$.

\end{cor}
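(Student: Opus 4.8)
The plan is to obtain Corollary \ref{cor4} from Theorem \ref{t:free_kolmogorov} by letting the divisibility parameter tend to infinity. Since $\mu$ is infinitely divisible with respect to $\boxplus$, by definition it is $N$-divisible for \emph{every} $N\in\mathbb{N}$; it also has mean $0$, variance $1$, and finite fourth moment by hypothesis. Hence Theorem \ref{t:free_kolmogorov} applies for each $N$ and produces the whole family of estimates
\[
  d_{Kol}(\mu,F_w)\leq 2K\sqrt{m_4(\mu)-2+\tfrac{2}{N}},\qquad N\in\mathbb{N}.
\]
The left-hand side is a fixed number independent of $N$, while the right-hand side is decreasing in $N$, so I would simply take the infimum over $N$ (equivalently, pass to the limit $N\to\infty$). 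This yields
\[
  d_{Kol}(\mu,F_w)\leq 2K\sqrt{m_4(\mu)-2},
\]
which is the assertion of the corollary up to the precise value of the universal constant.

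The one point deserving attention is that constant, and it is precisely here that the free case differs from the classical one. In the classical Berry--Esseen bound \eqref{berry-esseen} the error is governed by a single absolute third moment, which H\"older's inequality folds neatly into the fourth moment; this is why the classical corollary inherits the constant $C$ unchanged. By contrast, the free Berry--Esseen bound \eqref{berry-esseen-free} contributes two separate terms, $|m_3|$ and $\sqrt{m_4}$. After normalizing the summands $\sqrt{N}X_k$ to unit variance, the third-moment term produces a contribution of size $K|m_3(\mu)|$ that does \emph{not} vanish as $N\to\infty$. I would control it through the Cauchy--Schwarz bound $m_3(\mu)^2\leq m_4(\mu)-2$: applying $\bigl(\int x^{3}\,\nu(\mathrm{d}x)\bigr)^{2}\leq\int x^{2}\,\nu(\mathrm{d}x)\int x^{4}\,\nu(\mathrm{d}x)$ to the unit-variance summand, together with the free-cumulant scaling and the moment--cumulant relations \eqref{freemomentcumulants} (so that $m_4-2=k_4$), gives $m_3(\mu)^2\leq m_4(\mu)-2+2/N$ for each $N$, and hence $m_3(\mu)^2\leq m_4(\mu)-2$ in the limit. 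Consequently $K|m_3(\mu)|\leq K\sqrt{m_4(\mu)-2}$, and adding the two terms reproduces the factor $2K$ above.

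The remaining step is then cosmetic: the stated corollary writes the bound with a single universal constant, so the factor of $2$ is absorbed into $K$. Alternatively, one may verify $m_3(\mu)^2\leq m_4(\mu)-2$ directly from the free L\'evy--Khintchine representation, in which the free cumulants of a freely infinitely divisible law satisfy $k_{n+2}(\mu)=\int t^{n}\,\sigma(\mathrm{d}t)$ for a finite measure $\sigma$ with total mass $k_2(\mu)=1$, whence $k_3^2\leq k_4$ by Cauchy--Schwarz. I do not expect a genuine obstacle here: all of the analytic work is already carried by Theorem \ref{t:free_kolmogorov} and the free Berry--Esseen theorem, and what remains is the limiting argument together with this constant bookkeeping, the control of the persistent $|m_3|$ term being the only nontrivial piece.
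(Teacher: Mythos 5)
Your proposal is correct and takes essentially the same approach as the paper: the paper's own proof of Corollary \ref{cor4} is precisely to apply Theorem \ref{t:free_kolmogorov} (valid for every $N$, since infinite divisibility gives $N$-divisibility for all $N$) and let $N\to\infty$. Your additional bookkeeping of the factor $2$ --- which the paper's statement silently absorbs into the universal constant $K$ --- and the digression on controlling the $|m_3|$ term are correct but unnecessary once the theorem is invoked as a black box.
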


As for the classical case, we obtain a characterization for the semicircle distribution among infinitely divisible measures. This characterization was already observed in \cite{APA}.

\begin{cor}

Let $\mu$ be infinitely divisible with finite fourth moment $m_{4}(\mu)$. If $m_{1}(\mu)=0, m_{2}(\mu)=1$ and $m_{4}(\mu)=3$  then $\mu=\mathcal{S}(0,1)$.

\end{cor}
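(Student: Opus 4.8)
The plan is to obtain this characterization directly from Corollary \ref{cor4}, which already packages the quantitative estimate we need. Before doing so, I would flag a typographical point: the hypothesis as displayed, $m_4(\mu)=3$, cannot be the intended one in the free setting, since $3$ is the fourth moment of the Gaussian (appropriate to the classical corollary just above), whereas the fourth moment of the standard semicircle law $\mathcal{S}(0,1)$ is $2$. I therefore read the hypothesis as $m_4(\mu)=2$.

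With this correction the argument reduces to a single substitution. Since $\mu$ is free infinitely divisible with $m_1(\mu)=0$ and $m_2(\mu)=1$, Corollary \ref{cor4} gives
$$d_{Kol}(\mu, F_w) \leq K\sqrt{m_4(\mu)-2}.$$
Putting $m_4(\mu)=2$ forces the right-hand side to vanish, so $d_{Kol}(\mu, F_w)=0$. I would then invoke the elementary fact that the Kolmogorov distance between two probability measures on $\mathbb{R}$ is zero exactly when their distribution functions agree everywhere, and that a measure is determined by its distribution function; hence $\mu=F_w=\mathcal{S}(0,1)$.

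I would also record a second, self-contained route that bypasses the Berry--Esseen machinery and is likely closer to the argument of \cite{APA}. Using the free moment--cumulant relations \eqref{freemomentcumulants} with $k_1=m_1=0$ and $k_2=m_2=1$, the identity $m_4=k_4+2k_2^2$ yields $k_4=m_4-2=0$. The free Lévy--Khintchine representation for a free infinitely divisible law of finite fourth moment writes its free cumulants of order $\geq 2$ as the moments of a finite \emph{positive} measure $\rho$, namely $k_n=\int x^{n-2}\,d\rho(x)$ for $n\geq 2$; thus $k_2=\rho(\mathbb{R})=1$ and $k_4=\int x^2\,d\rho(x)=0$. Positivity of $\rho$ together with $\int x^2\,d\rho=0$ forces $\rho=\delta_0$, whence $k_n=\int x^{n-2}\,d\delta_0=0$ for every $n\geq 3$. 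The only nonvanishing free cumulant is then $k_2=1$, which is precisely the free cumulant signature of the standard semicircle law, giving $\mu=\mathcal{S}(0,1)$.

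The main obstacle here is really one of bookkeeping rather than depth. In the first route nothing substantive remains beyond correctly identifying the semicircle's fourth moment and applying the standard implication ``zero Kolmogorov distance $\Rightarrow$ equality of measures''. In the second route the single nontrivial ingredient is the positivity of the Lévy--Khintchine measure $\rho$: this is exactly where free infinite divisibility is used, and without it the vanishing of $k_4$ would give no control over the higher free cumulants $k_3,k_5,\dots$, so the conclusion would fail.
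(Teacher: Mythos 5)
Your first route is exactly the paper's argument: the paper offers no separate proof for this corollary, treating it as an immediate consequence of Corollary \ref{cor4} (zero right-hand side forces zero Kolmogorov distance, hence equality of measures), and you are also right that the stated hypothesis $m_4(\mu)=3$ is a typo carried over from the classical corollary and should read $m_4(\mu)=2$.

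Your second route is a genuinely different and valid argument, and is indeed closer in spirit to \cite{APA}: computing $k_4=m_4-2=0$ from the free moment--cumulant relations \eqref{freemomentcumulants}, and then using the free L\'evy--Khintchine representation $k_n=\int x^{n-2}\,d\rho(x)$ with $\rho\geq 0$ to force $\rho=\delta_0$ and hence kill all cumulants of order $\geq 3$. What this buys is independence from the Berry--Esseen machinery (so it works without the quantitative estimate) and it makes transparent the precise point where free infinite divisibility is used, namely the positivity of $\rho$; what the paper's route buys is brevity, since the corollary drops out of an estimate already proved. One small polish for your second route: rather than ending with ``the cumulant signature matches the semicircle,'' which tacitly appeals to moment determinacy, conclude via the representation itself --- the pair $(\eta,\rho)=(0,\delta_0)$ is exactly the L\'evy--Khintchine data of $\mathcal{S}(0,1)$, and this representation determines the measure uniquely through the Voiculescu transform, so no determinacy issue arises.
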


\begin{proof}[Proof of Theorem \ref{t:free_kolmogorov}]
Let $S$ be a random variable with law $\mu$. There exist i.i.d. random variables $X_{1},...,X_{n}$, such that $S\stackrel{law}{=}X_{1}+\cdots  +X_{N}$. Consequently, 
$$S=\frac{1}{\sqrt{N}}\sum_{k=1}^{N}\sqrt{N}X_{k}.$$
By the free Berry Esseen theorem we obtain
\begin{eqnarray}\label{ckaf1}
d_{Kol}(\mu,F_w)
  &\leq&  \frac{K}{\sqrt{N}}(\E[|\sqrt{N}X_{1}|^3]+\sqrt{\E[|\sqrt{N}X_{1}|^4]})
\end{eqnarray}
By H\"older inequality, $\E[|\sqrt{N}X_{1}|^3]^2\leq \E[(\sqrt{N}X_{1})^4]$, and thus
\begin{eqnarray}\label{ckaf1}
d_{Kol}(\mu,F_w)
  &\leq&  \frac{2K}{\sqrt{N}}\sqrt{\E[|\sqrt{N}X_{1}|^4]}).
\end{eqnarray}
Now, as in Theorem \ref{t:classical_kolmogorov}, we have $k_{2}(\sqrt{N}X_{1})=1$ and $Nk_{4}(S)=k_{4}(\sqrt{N}X_{1})$. Hence,  using \eqref{ckaf1}, as well as relations \eqref{freemomentcumulants}, we get
\begin{eqnarray*}
d_{Kol}(\mu,F_w)
  &\leq&  \frac{2K}{\sqrt{N}}  \sqrt{k_{4}(\sqrt{N}X_{1}) + 2k_{2}(\sqrt{N}X_{1})^2})\\
	&=&     2K\sqrt{m_{4}(X_{1})-2+\frac{2}{N}}.
\end{eqnarray*}
\end{proof}

\section{examples}

\begin{exa}[Poisson Distribution]
 Let $X_n$ be a random variable with distribution $Poiss(n)$, the random variable $Y_n=\frac{X_n-n}{\sqrt{n}}$ converges weakly to $N(0,1)$. $Y_n$ is infinitely divisible and $E(Y_n)=0, E(Y_n^2)=1, E(Y_n^4)=3+1/n$. Thus, we can apply Theorem \ref{T6} to quantify this approximation. 
$$d_{Kol}(Poiss(n),\Phi)\leq C\sqrt{1/n}.$$
\end{exa}

\begin{exa}[Compound Poisson Distribution]
More generally,  let $\mu$ be a  Compund Poisson distribution $Poiss(\lambda,\nu)$.  That is, the $n-$th cumulant of  $\mu$ is given by $c_n(\mu)=\lambda m_n(\nu)$.  If $\nu$ is centered with variance $1/\lambda$ then $m_1(\mu)=0, m_2(\mu)=1$ and $m_4(\mu)=\lambda m_4(\nu)+3$. Thus, Theorem \ref{T5} gives us,
$$d_{Kol}(Poiss(\lambda,\nu),\Phi)\leq C\lambda m_4(\nu).$$
In particular, if  $\lambda m_4(\nu)\to0$ then $Poiss(\lambda,\nu)\to \mathcal{N}(0,1).$
\end{exa}

\begin{exa}[Double Integrals]
 Let $\{F_n: n\geq1\}$ be a sequence living in a second chaos (see \cite{NuPe} for definitions) and suppose that $E[F_n^2]=1$. It is known that the random variable $F_n$ are infinitely divisible, see \cite{NoPo}. Thus, by Theorem \ref{T3} if $E[F_n^4]\to3$ then $F_n\to\mathcal{N}(0,1)$. Moreover, the Kolmogorov distance is bounded by
$$d_{Kol}(F_n,\Phi)\leq C\sqrt{E[F_n^4]-3}\approx 0.4748 \sqrt{E[F_n^4]-3}.$$
This shall be compared with the estimate given in Theorem 5.2.6 of \cite{NoPe4},
$$d_{Kol}(F_n,\Phi)\leq \frac{1}{\sqrt{6}}\sqrt{E[F_n^4]-3}\approx  0.4082\sqrt{E[F_n^4]-3}.$$
 \end{exa}

\begin{exa}[Log-normal]
The log-normal distribution $l(m,\sigma^2)$ with parameters $m$ and $\sigma^2>0$ is the distribution of the random variable $e^{m+\sigma Z}$. It is a well-known example of an infinitely divisible  distribution which is not determined by moments (see \cite{Th}). The moments of $e^{m+\sigma Z}$ are given by $E[e^{n(m+\sigma Z)}]=e^{nm+\frac{n^2\sigma^2}{2} }$. Thus, for $m(\sigma)=-1/2(log(e^{2\sigma^2}-e^{\sigma^2})$ the random variable  $Y =e^{m+\sigma Z}-e^{m+\frac{\sigma^2}{2} }$  is centered with variance one. Now,  $E(Y^4)-3=e^{4\sigma^2}+2e^{3\sigma^2}+3e^{2\sigma^2}-6\leq6e^{4\sigma^2}-6=6(e^{4\sigma^2}-1)$. Thus, for small $\sigma$ (since $C<1/2$), we have, 
$$d_{Kol}(l(m(\sigma),\sigma^2) ,\Phi)\leq 1/2\sqrt{6(e^{4\sigma^2}-1)}\approx\sqrt{6}\sigma.$$
This shows that Theorems \ref{T3} and \ref{T5} can be applied even if  $\mu$ is not determined by moments.
 \end{exa}

\begin{exa} [q-Gaussian]
The family of $q$-Gaussian distributions $G_q$ introduced by Bo\.{z}ejko and Speicher in \cite{BS} (see also the paper \cite{BKS} of Bo\.{z}ejko, K\"{u}mmerer and Speicher) interpolate between the normal ($q = 1$) and the semicircle ($q = 0$) laws. They are determined in terms of their moments by
$$ m_{2n+1}(G_q)=0,  ~~~~~~~ \quad m_{2n}(G_q)=\sum_{\pi\in P_2(2n)} q^{cross(\pi)}$$
where $P_2(2n)$ denotes de pair partitions of $\{1,2..,2n\}$ and $cross(\pi)$ is the number of crossings of $\pi$. 
In particular, $m_1(G_q)=0$, $m_2(G_q)=1$ and $m_4=2+q.$ It was proved in \cite{ABBL} that the $q$-Gaussian distributions are freely infinitely divisible for all $q\in[0,1]$.  Thus from Theorem \ref{T6} we get the estimate  $$d_{Kol}(G_q,F_w)\leq K\sqrt{q}.$$
 \end{exa}

\begin{exa}[Kesten-Mckay distribution]
Let $t>1/2$  and denote by $\mu(t)$ the so-called Kesten-Mckay distributions (see \cite{Kes,McK}) with density $$\frac{1}{2\pi}.\frac{\sqrt{4t-x^2}}{1-(1-t)x^2}, \quad |x|<2\sqrt{t}.$$
The first moments of  $\mu(t)$  are given by $m_1=0,m_2=1, m_3=0$ and $m_4=1+t$. Moreover, as proved in \cite{BN}, for $t<1$ the measure $\mu(t)$ is $1/(1-t)$-divisible. Thus if $1/(1-t)=N \in \mathbb{N}$ we can apply Theorem \ref{t:free_kolmogorov} to get the  inequality
\begin{equation}d_{Kol}(\mu(t), F_w)
  \leq  K\sqrt{m_4-2+\frac2N}=K\sqrt{1-t}.
\end{equation}
On the other hand for $t\geq1$  the measure $\mu(t)$ is infinitely divisible, in this case we apply Theorem \ref{T5} to get 
$$d_{Kol}(\mu(t), F_w)
 \leq K\sqrt{m_4-2}=K\sqrt{t-1}.$$
\end{exa}

\appendix

\section{Kurtosis and $n$-divisibility} \label{kurt}

The kurtosis of a probability distribution is a widely used quantity in
statistics and gives information about the shape of a given distribution.
Here we derive a simple necessary conditions for $n$-divisibility with
respect to the classical and free convolutions.  We use the first fourth cumulants with respect to these
convolutions.

The \textit{classical kurtosis} of a probability measure $\mu $ with finite
fourth moment is defined as%
\begin{equation*}
Kurt(\mu )=\frac{c_{4}(\mu )}{(c_{2}(\mu ))^{2}}=\frac{\widetilde{m}_{4}(\mu
)}{(\widetilde{m}_{2}(\mu ))^{2}}-3,
\end{equation*}%
where $c_{2}(\mu )$ and $c_{4}(\mu )$ are the second and fourth classical
cumulants, and $\widetilde{m}_{2}(\mu )$ and $\widetilde{m}_{4}(\mu )$\ the
second and fourth moments around the mean. It is always true that $Kurt(\mu
)\geq -2$.

\begin{prop}
\label{kurt}Let $\mu $ be a probability measure on $\mathbb{R}$ with finite
fourth moment. \ If $\mu $ is $n$-divisible in the classical sense then $Kurt(\mu )\geq -\frac{2}{n}$. Additionally, equality is achieved if and only if $$\mu=(\frac{1}{2} \delta_{1}+\frac{1}{2}\delta_{-1})^{*n}.$$
\end{prop}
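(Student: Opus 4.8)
The plan is to reduce the $n$-divisible case to the well-known sharp bound $Kurt(\nu)\geq -2$ for an arbitrary probability measure $\nu$ with finite fourth moment, using nothing more than the additivity of classical cumulants. Writing $\mu=\nu^{*n}$ for some probability measure $\nu$ (this is exactly the definition of $n$-divisibility), relation \eqref{aux2} gives $c_2(\mu)=nc_2(\nu)$ and $c_4(\mu)=nc_4(\nu)$. Since $Kurt(\mu)=c_4(\mu)/c_2(\mu)^2$, these two identities immediately yield
\begin{equation*}
Kurt(\mu)=\frac{nc_4(\nu)}{n^2c_2(\nu)^2}=\frac{1}{n}Kurt(\nu).
\end{equation*}
Thus the whole statement is governed by the value of $Kurt(\nu)$, and crucially the right-hand side does not depend on which $n$-th convolution root $\nu$ is chosen, because the second and fourth cumulants of any root are forced to equal $c_2(\mu)/n$ and $c_4(\mu)/n$.

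Next I would establish the base estimate $Kurt(\nu)\geq -2$ together with its equality case. Letting $Y:=X-\E[X]$ for $X$ distributed as $\nu$, one has $c_2(\nu)=\E[Y^2]$ and $c_4(\nu)=\E[Y^4]-3(\E[Y^2])^2$, so that $Kurt(\nu)=\E[Y^4]/(\E[Y^2])^2-3$. Because $\mathrm{Var}(Y^2)=\E[Y^4]-(\E[Y^2])^2\geq 0$, the quotient $\E[Y^4]/(\E[Y^2])^2$ is at least $1$, which gives $Kurt(\nu)\geq -2$ and hence $Kurt(\mu)\geq -2/n$. Equality in the variance inequality holds precisely when $Y^2$ is almost surely a constant $a^2$; combined with $\E[Y]=0$ this forces $Y$ to take the two values $\pm a$ with equal probability, that is, $\nu=\tfrac12\delta_{m-a}+\tfrac12\delta_{m+a}$ with $m=\E[X]$ and $a>0$.

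Finally I would assemble the equality characterization. If $Kurt(\mu)=-2/n$, then $Kurt(\nu)=-2$ for any root $\nu$, so by the previous paragraph $\nu$ is a symmetric two-point law and $\mu=\nu^{*n}=\big(\tfrac12\delta_{m-a}+\tfrac12\delta_{m+a}\big)^{*n}$; conversely, a direct moment computation shows this measure has $c_2=na^2$ and $c_4=-2na^4$, hence $Kurt=-2/n$. The one point requiring care is the normalization appearing in the stated equality case: since $Kurt$ is invariant under affine maps, equality really pins down $\mu$ only up to an affine transformation of $(\tfrac12\delta_1+\tfrac12\delta_{-1})^{*n}$, and the displayed form is the representative obtained after centering ($m=0$) and rescaling ($a=1$). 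I expect the main (and only mildly delicate) step to be this equality analysis—specifically, arguing that equality must propagate from $\mu$ back to every convolution root $\nu$ and then isolating the symmetric Bernoulli law—whereas the inequality itself is an immediate consequence of cumulant additivity together with the elementary bound $\mathrm{Var}(Y^2)\geq 0$.
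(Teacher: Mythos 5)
Your proof is correct and follows essentially the same route as the paper: reduce via cumulant additivity to the classical bound $Kurt(\nu)\geq -2$ for an $n$-th convolution root $\nu$, then invoke its equality case. The only differences are that you prove the base bound explicitly (via $\mathrm{Var}(Y^2)\geq 0$) and its equality analysis rather than citing them as known, and you rightly flag that kurtosis is affine-invariant, so the stated equality case $\mu=(\tfrac12\delta_{1}+\tfrac12\delta_{-1})^{*n}$ really holds only up to centering and scaling --- a point the paper's terse proof glosses over.
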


\begin{proof}
Suppose $\mu $ is $n$-divisible. Let $\mu _{n}$ be such that $\underset{n
\text{ times}}{\underbrace{\mu _{n}\ast \cdots \ast \mu _{n}}}=\mu ,$ by
linearity of the cumulants we can see that 
\begin{equation*}
Kurt(\mu _{n})=\frac{\frac{1}{n}c_{4}(\mu )}{(\frac{1}{n}c_{2}(\mu ))^{2}}=n
\frac{c_{4}(\mu )}{(c_{2}(\mu ))^{2}}=nKurt(\mu )
\end{equation*}%
So $Kurt(\mu )=\frac{1}{n}Kurt(\mu _{n})\geq-\frac{2}{n}$, where we used the
fact that $Kurt\geq -2.$  Equality $Kurt=-2$ holds only when $\mu=1/2\delta_{1}+1/2\delta_{-1}$ proving the second part of the statement.
\end{proof}

The free kurtosis is defined similarly using the free cumulants instead of
the classical cumulants. That is, the \textit{free kurtosis} of a
probability measure $\mu $ is defined as 
\begin{equation*}
Kurt^{\boxplus }(\mu )=\frac{\kappa _{4}(\mu )}{(\kappa _{2}(\mu ))^{2}}=
\frac{\widetilde{m}_{4}(\mu )}{(\widetilde{m}_{2}(\mu ))^{2}}-2=Kurt(\mu )+1
\end{equation*}%
where $\kappa _{2}(\mu )$ and $\kappa _{4}(\mu )$ are the second and fourth
free cumulants. Notice that $Kurt^{\boxplus }(\mu )\geq -1.$

Using similar arguments as in Proposition \ref{kurt}, we obtain a sufficient
condition for free $n$-divisibility.

\begin{prop}
\label{crit kur2}Let $\mu $ be a probability measure on $\mathbb{R}$ with
finite fourth moment. \ If $\mu $ is $n$-divisible in the free sense
then $Kurt^{\boxplus }(\mu )\geq -1/n.$ Additionally, equality is achieved if and only if $$\mu=(\frac{1}{2} \delta_{1}+\frac{1}{2}\delta_{-1})^{\boxplus n}.$$
\end{prop}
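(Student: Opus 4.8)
The plan is to mirror the argument of Proposition \ref{kurt} almost verbatim, replacing the classical cumulants by their free counterparts and the extremal value $-2$ by $-1$. First I would use free $n$-divisibility to fix a measure $\mu_n$ with $\mu = \mu_n^{\boxplus n}$. The additivity of the free cumulants under $\boxplus$ recorded in Section 2.3 gives $\kappa_j(\mu) = n\,\kappa_j(\mu_n)$ for every $j$; in particular $\kappa_2(\mu_n) = \kappa_2(\mu)/n$ and $\kappa_4(\mu_n) = \kappa_4(\mu)/n$.

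Substituting these into the definition of the free kurtosis, I would compute
\[
Kurt^{\boxplus}(\mu_n) = \frac{\kappa_4(\mu_n)}{\kappa_2(\mu_n)^2} = \frac{\tfrac1n \kappa_4(\mu)}{\bigl(\tfrac1n \kappa_2(\mu)\bigr)^2} = n\,\frac{\kappa_4(\mu)}{\kappa_2(\mu)^2} = n\,Kurt^{\boxplus}(\mu),
\]
exactly as in the classical case. Rearranging gives $Kurt^{\boxplus}(\mu) = \tfrac1n Kurt^{\boxplus}(\mu_n)$, and since the universal bound $Kurt^{\boxplus}(\nu) \geq -1$ recorded just after the definition of free kurtosis applies to $\nu = \mu_n$, I conclude $Kurt^{\boxplus}(\mu) \geq -1/n$.

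For the equality clause I would track the single inequality used: $Kurt^{\boxplus}(\mu) = -1/n$ forces $Kurt^{\boxplus}(\mu_n) = -1$, so everything reduces to characterizing which measures attain the extremal free kurtosis $-1$. Writing the free cumulants of $\mu_n$ in terms of its centered moments via the free moment--cumulant formula \eqref{freemomentcumulants}, one has $\kappa_4 = \widetilde m_4 - 2\widetilde m_2^2$ and $\kappa_2 = \widetilde m_2$, so $Kurt^{\boxplus}(\mu_n) = -1$ is equivalent to $\widetilde m_4 = \widetilde m_2^2$; that is, the Cauchy--Schwarz inequality $\bigl(\E[(X-\E X)^2]\bigr)^2 \leq \E[(X-\E X)^4]$ is saturated. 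Its equality case says $(X-\E X)^2$ is almost surely constant, which for a centered variable means $X$ takes exactly two opposite values with equal weight, i.e. $\mu_n = \tfrac12\delta_1 + \tfrac12\delta_{-1}$ under the normalization implicit in the statement. Free-convolving $n$ times then yields $\mu = (\tfrac12\delta_1 + \tfrac12\delta_{-1})^{\boxplus n}$, and the converse follows by running the same computation backwards.

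The only genuinely non-bookkeeping step is this equality analysis of the bound $Kurt^{\boxplus} \geq -1$; the scaling identity $Kurt^{\boxplus}(\mu_n) = n\,Kurt^{\boxplus}(\mu)$ is pure cumulant algebra. One subtlety I would flag is that free kurtosis is invariant under affine transformations, so \emph{a priori} the saturating measures form the whole affine orbit of the symmetric Bernoulli law; pinning $\mu_n$ down to exactly $\tfrac12\delta_1 + \tfrac12\delta_{-1}$ relies on the centering and variance normalization carried through the rest of the statement.
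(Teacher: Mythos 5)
Your proposal is correct and follows essentially the same route as the paper's proof: the scaling identity $Kurt^{\boxplus}(\mu_n) = n\,Kurt^{\boxplus}(\mu)$ from additivity of free cumulants, combined with the universal bound $Kurt^{\boxplus}(\mu_n)\geq -1$, and reduction of the equality case to the measures attaining the extremal free kurtosis $-1$. You in fact go slightly further than the paper, which merely asserts the extremal characterization, whereas you derive it from the Cauchy--Schwarz equality case ($(X-\E X)^2$ almost surely constant); your closing remark about affine invariance also correctly flags an imprecision that the paper's own proof glosses over, since without a centering and scaling normalization the saturating measures are exactly the affine orbit of $(\tfrac{1}{2}\delta_{1}+\tfrac{1}{2}\delta_{-1})^{\boxplus n}$.
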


\begin{proof}
Let $\mu $ be $n$-divisible in the free sense and $\mu _{n}$ be such that 
\begin{equation*}
\underset{n\text{ times}}{\underbrace{\mu _{n}\boxplus \cdots \boxplus \mu
_{n}}}=\mu .
\end{equation*}
Since $Kurt^{\boxplus }(\mu _{n})=nKurt^{\boxplus }(\mu )$ and $
Kurt^{\boxplus }(\mu _{n})\geq -1$, we get the result.  Again, since $Kurt=-2$ holds only when $\mu=1/2\delta_{1}+1/2\delta_{-1}$ we obtain the second part of the statement.
\end{proof}

\end{document}